\newcommand{\C}{\mathbb{C}}
\newcommand{\cohshv}[1]{\widetilde{#1}}
\newcommand{\faceof}{\preceq}
\newcommand{\mProj}{\Proj_{\textrm{MH}}}
\newcommand{\N}{\mathbb{N}}
\newcommand{\NReal}{N_{\R}}
\newcommand{\dual}[1]{{#1}^{\vee}}
\newcommand{\R}{\mathbb{R}}
\newcommand{\restrict}[2]{\left.#1\right|_{#2}}
\newcommand{\set}[2]{\left\{#1\,\middle|\,#2\right\}}
\newcommand{\strshf}[1]{\mathscr{O}_{#1}}
\newcommand{\supp}[1]{\left|#1\right|}
\newcommand{\TCDiv}{\CDiv_{T}}
\newcommand{\Z}{\mathbb{Z}}
\newcommand{\wrong}[1]{}
\DeclareMathOperator{\CDiv}{CDiv}
\DeclareMathOperator{\Hom}{Hom}
\DeclareMathOperator{\Pic}{Pic}
\DeclareMathOperator{\Proj}{Proj}
\DeclareMathOperator{\relint}{rel\,int}
\DeclareMathOperator{\SF}{SF}
\DeclareMathOperator{\Spec}{Spec}
\DeclareMathOperator{\tProj}{tProj}
\DeclareMathOperator{\im}{im}
\DeclareMathOperator{\Div}{div}
\numberwithin{equation}{section}
\newtheorem{theorem}[equation]{Theorem}
\newtheorem{lemma}[equation]{Lemma}
\newtheorem{corollary}[equation]{Corollary}
\newtheorem{proposition}[equation]{Proposition}
\theoremstyle{definition}
\newtheorem{definition}[equation]{Definition}
\newtheorem{notation}[equation]{Notation}
\theoremstyle{remark}
\newtheorem{remark}[equation]{Remark}
\newtheorem{example}[equation]{Example}
\newtheorem{assumption}[equation]{Assumption}
\title{Comparing two Proj-like constructions on toric varieties}
\author{Vivek Mohan Mallick}
\email{vmallick@iiserpune.ac.in}
\address{Department of Mathematics, IISER Pune, Dr Homi Bhabha Road, Pashan, Pune 411008, India}
\author{Kartik Roy}
\email{kartik.roy@students.iiserpune.ac.in}
\address{Department of Mathematics, IISER Pune, Dr Homi Bhabha Road, Pashan, Pune 411008, India}
\thanks{The second author is supported by CSIR.}
\date{\today}
\begin{document}

\begin{abstract}
    The paper explores the relation between Perling's toric Proj of a multigraded ring $A$ associated with a toric variety ($\tProj A$), and Brenner and Schr\"oer's homogeneous spectrum $\mProj A$ of the same ring. We show that there is always a canonical open embedding and study a criterion for them to be isomorphic.
\end{abstract}

\maketitle

\tableofcontents

\section{Introduction}

The homogeneous coordinate ring of a toric variety $X$, described by Cox \cite{cox:homcoordrng}, carries enough algebraic information about $X$ enabling one to reconstruct the toric variety.
This was demonstrated by Cox which was then reinterpreted in terms of a Proj-like construction by Perling \cite{perling:toricvarhomprimespec}, which we denote by $\tProj$.
These authors and Kajiwara \cite{kajiwara:funtorvar} also studied the relation between graded modules over the multigraded homogeneous coordinate ring and quasicoherent sheaves over the toric variety.
Cox's reconstruction of the toric variety involves taking quotients of a relevant open subset of the spectrum of the coordinate ring.
A similar quotient construction was introduced by Brenner and Schr\"oer \cite{breshr:ampfamilies} describing a multigraded Proj of a ring graded by a finitely generated abelian group, which we denote by $\mProj$.
This paper explores the relations between these constructions.

Projective spaces $\mathbb{P}^{n} $ and weighted projective spaces $\mathbb{P}(q_0,\dots,q_n)$  are homogeneous spectrum of $\N$-graded rings obtained using the classical $\Proj$ construction of projective varieties. 
A generalization of this construction to the homogeneous coordinate ring of a toric variety was implicit in Cox \cite{cox:homcoordrng} and Kajiwara \cite{kajiwara:funtorvar}, and was made explicit by Perling \cite{perling:toricvarhomprimespec}.
Though the concept of homogeneous coordinate ring $A$ of $X$ is not unique (see \cite{ahs:quotprtorvar}), we shall work with Cox's construction which is universal in the sense that every other homogeneous coordinate ring maps to it. If $A$ is a homogeneous coordinate ring  of $X$ then coherent sheaves of modules on $X$ correspond to graded $A$-modules (see \cite{cox:homcoordrng, perling:toricvarhomprimespec, ahs:quotprtorvar}). The cohomology of sheaves of modules on toric varieties $X$ using graded $A$-modules structure were studied by \cite{eisenbud:cohomtorvar} and \cite{mustata:vanthmtorvar}. 

Let $X_{\Delta}$ be a toric variety corresponding to fan $\Delta$ in the real vector space $\NReal$ for some lattice $N$. $X_{\Delta}$ is said to have enough effective invariant Cartier divisors if for each cone $\sigma \in \Delta$ the set $X_{\Delta} \setminus U_{\sigma}$ is the support of an effective invariant Cartier divisor. Now corresponding to $X_{\Delta}$ there exists a homogeneous coordinate ring $A$ graded by $\Pic(\Delta)$, the Picard group of $X_{\Delta}$. Perling \cite{perling:toricvarhomprimespec} and Kajiwara \cite{kajiwara:funtorvar} showed that $X_{\Delta}$ is isomorphic to $\tProj A$, which corresponds to homogeneous prime ideals in $A$ with a certain irrelevant locus removed. Meanwhile, $\mProj A$ is in bijection with homogeneous ideals (not necessarily prime) such that homogeneous elements in the complement of such ideals are multiplicatively closed. We present the following relation between these spaces.
\begin{theorem}[see theorem \ref{the:embdd}]
There is an equivariant open embedding $\mu : \tProj A \rightarrow \mProj A$.
\end{theorem}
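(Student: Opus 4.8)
The plan is to realize both spaces as unions of affine charts of the same shape and then to check that $\mu$ is simply the inclusion of the subfamily of charts coming from the fan. For a homogeneous element $f \in A$ write $A_{(f)}$ for the degree-zero part of the localization $A_f$ and set $D_+(f) = \Spec A_{(f)}$. Brenner and Schr\"oer's $\mProj A$ is glued from the charts $D_+(f)$ as $f$ ranges over the (relevant) homogeneous elements of $A$, while Perling's $\tProj A$ is glued from the charts $D_+(x^{\hat\sigma})$, where $x^{\hat\sigma} = \prod_{\rho \notin \sigma(1)} x_{\rho}$ and $\sigma$ ranges over the cones of $\Delta$. Since each $x^{\hat\sigma}$ is itself a homogeneous element of $A$, the collection of charts defining $\tProj A$ is a subfamily of those defining $\mProj A$; this reflects that the toric irrelevant ideal is contained in the one implicit in the Brenner--Schr\"oer construction, and it is the source of the map $\mu$.

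\textbf{Construction of $\mu$.}
First I would define $\mu$ chartwise: on $D_+(x^{\hat\sigma}) \subseteq \tProj A$ let $\mu$ be the identity onto the corresponding chart $D_+(x^{\hat\sigma}) \subseteq \mProj A$, both being $\Spec A_{(x^{\hat\sigma})}$. To see that these chartwise maps glue to a morphism of locally ringed spaces, I would check agreement on overlaps: the intersection of two charts is $D_+(x^{\hat\sigma} x^{\hat\tau})$, with coordinate ring $A_{(x^{\hat\sigma} x^{\hat\tau})}$ in both constructions, and the transition isomorphisms are induced by the same localization maps $A_{(x^{\hat\sigma})} \to A_{(x^{\hat\sigma} x^{\hat\tau})} \leftarrow A_{(x^{\hat\tau})}$, so they coincide. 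On underlying points, $\mu$ sends a homogeneous prime $\mathfrak{p}$ (not containing some $x^{\hat\sigma}$) to the same prime regarded as a point of $\mProj A$; this is manifestly injective, and its image is exactly $\bigcup_{\sigma \in \Delta} D_+(x^{\hat\sigma})$.

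\textbf{Open embedding and equivariance.}
It then remains to upgrade $\mu$ to an open embedding. The image $\bigcup_{\sigma} D_+(x^{\hat\sigma})$ is open in $\mProj A$, being a union of basic open subsets. Because $\tProj A$ and $\mProj A$ are assembled from the identical affine pieces $\Spec A_{(x^{\hat\sigma})}$ along the identical overlaps, the subspace topology and the restricted structure sheaf that $\mProj A$ induces on this image agree with those of $\tProj A$; hence $\mu$ is an isomorphism onto its open image. Equivariance is checked on charts: the torus $\torus$ acts compatibly on each $\Spec A_{(x^{\hat\sigma})}$ through the grading, each $D_+(x^{\hat\sigma})$ is $\torus$-stable, and $\mu$ restricts to the identity there, so it intertwines the two actions.

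\textbf{Main obstacle.}
The real content is the identification in the construction step: one must verify that Perling's $\tProj A$ is genuinely the gluing of the charts $\Spec A_{(x^{\hat\sigma})}$ with the same transition cocycles that Brenner and Schr\"oer use, so that the two structure sheaves restrict to the same sheaf on the common image. Once the two constructions are matched on these shared charts, injectivity, openness of the image, and equivariance are formal; the work lies in pinning down the affine models and the gluing data, together with the observation that every defining chart of $\tProj A$ is a legitimate chart of $\mProj A$.
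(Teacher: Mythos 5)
Your overall strategy --- realize $\tProj A$ and $\mProj A$ as unions of affine charts $\Spec A_{(f)}$ and observe that the charts indexed by the cones of $\Delta$ form a subfamily of the charts defining $\mProj A$ --- is exactly the route the paper takes. But there is one genuine gap, and it sits precisely where you pass from ``each $x^{\hat\sigma}$ is itself a homogeneous element of $A$'' to ``the collection of charts defining $\tProj A$ is a subfamily of those defining $\mProj A$.'' By definition \ref{def:prdrlelt}, $\mProj A$ is the union of $D_+(f)$ only over \emph{relevant} homogeneous $f$, i.e.\ those for which the degrees of the homogeneous units of $A_f$ generate a finite-index subgroup of $\Pic(\Delta)$. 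Homogeneity alone does not place $D_+(x^{\hat\sigma})$ among the charts of $\mProj A$; if $\chi(h_{\sigma})$ failed to be relevant, its chart would simply not occur in $\mProj A$ and the inclusion of chart families would collapse. Verifying relevance is the one non-formal input of the proof: the paper does it via lemma \ref{lem:relcone}, showing $\deg(\SF(\check{\sigma})) = \Pic(\Delta)$ by a rank count in the short exact sequence induced from \ref{eq:splexseq}, and this is exactly where the hypothesis of enough invariant Cartier divisors (existence of support functions with support $\Delta(1)\setminus\sigma(1)$) enters. Your write-up identifies the ``main obstacle'' as matching the gluing data, but that part is formal once the charts are known to coincide; the missing relevance check is the actual content.

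A secondary, more cosmetic point: writing the distinguished elements as $x^{\hat\sigma} = \prod_{\rho\notin\sigma(1)} x_{\rho}$ presumes that $A$ is the Cox polynomial ring, which the paper only obtains in the simplicial case (lemma \ref{lem:polalg}). Theorem \ref{the:embdd} is stated for toric varieties with enough invariant Cartier divisors, where $A = \C[\dual{C}\cap\SF(\Delta)]$ need not be polynomial; the correct element is $\chi(h_{\sigma})$ for a support function $h_{\sigma}$ with $\supp{h_{\sigma}} = \Delta(1)\setminus\sigma(1)$. With that substitution and the relevance verification supplied, the rest of your argument (agreement on overlaps via $D_+(fg) = D_+(f)\cap D_+(g)$ as in lemma \ref{lem:dpfdpgfg}, openness of the image, and chartwise equivariance) goes through and matches the paper's proof.
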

Now assume $\Delta$ simplicial. A simplicial cone in $\Delta$ is a cone generated by a linearly independent subset of one-dimensional cones of $\Delta$. We say $\Delta$ is simplicially complete if it contains all simplicial cones generated by its rays. We prove the following.
\begin{theorem}
 $\mu : \tProj A \rightarrow \mProj A$ is an isomorphism if and only if $\Delta$ is simplicially complete.
\end{theorem}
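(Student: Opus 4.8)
The plan is to reduce the statement to a combinatorial comparison of the torus-invariant points of the two spaces, and to show that these are governed respectively by the cones of $\Delta$ and by the linearly independent subsets of rays. Write $A = \C[x_\rho \mid \rho \in \Delta(1)]$ for the Cox ring, graded by $\Pic(\Delta)$ through $\deg x_\rho = \class{D_\rho}$, and for a subset $S \subseteq \Delta(1)$ let $\mathfrak{p}_S = \genby{x_\rho \mid \rho \in S}$ be the corresponding homogeneous monomial prime, where $u_\rho \in N$ denotes the primitive generator of the ray $\rho$. These are exactly the torus-invariant homogeneous primes, and since $\mu$ is equivariant its image is a torus-stable open subset, hence a union of orbits. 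I would first record that an equivariant open embedding is an isomorphism precisely when it is surjective, and that surjectivity can be tested on orbits, i.e.\ on the primes $\mathfrak{p}_S$: thus $\mu$ is an isomorphism if and only if every $\mathfrak{p}_S$ lying in $\mProj A$ already lies in the image of $\mu$.

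Next I would identify the two membership conditions explicitly. For $\tProj A$ the irrelevant locus is cut out by the ideal generated by the monomials $x^{\hat\sigma} = \prod_{\rho \notin \sigma(1)} x_\rho$, $\sigma \in \Delta$; hence $\mathfrak{p}_S$ lies in the image of $\mu$ iff $S \subseteq \sigma(1)$ for some $\sigma \in \Delta$, which by simpliciality of $\Delta$ is equivalent to the cone $\sigma_S$ spanned by $S$ being a cone of $\Delta$ (any subset of the rays of a simplicial cone spans a face, and faces of cones of a fan lie in the fan). For $\mProj A$ the relevance condition on $\mathfrak{p}_S$ unwinds to the requirement that the degrees $\setl{\class{D_\rho} \mid \rho \notin S}$ of the homogeneous elements available away from $\mathfrak{p}_S$ generate a finite-index subgroup of $\Pic(\Delta)$. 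Applying the defining exact sequence $0 \to M \to \freeab{\Delta(1)} \to \Pic(\Delta) \to 0$ and tensoring with $\Q$, this holds iff the projection $\MQ \to \Q^{S}$, $m \mapsto (\inpr{m}{u_\rho})_{\rho \in S}$, is surjective, and dualizing shows this is equivalent to the ray generators $\setl{u_\rho \mid \rho \in S}$ being linearly independent, i.e.\ to $\sigma_S$ being a simplicial cone.

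Combining the two computations, the image of $\mu$ consists of the $\mathfrak{p}_S$ with $\sigma_S \in \Delta$, while $\mProj A$ consists of the $\mathfrak{p}_S$ with $S$ linearly independent. Since every cone of the simplicial fan $\Delta$ is itself simplicial, the former set is always contained in the latter, which recovers the open embedding of the previous theorem. The two agree on all torus-invariant points — and hence $\mu$ is an isomorphism — exactly when every linearly independent subset $S$ of rays has $\sigma_S \in \Delta$, which is precisely the definition of $\Delta$ being simplicially complete.

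I expect the main obstacle to be the middle paragraph: translating Brenner and Schr\"oer's relevance condition into the finite-index (equivalently, rational spanning) statement about $\setl{\class{D_\rho} \mid \rho \notin S}$, and verifying that relevance may indeed be tested on the monomial primes rather than on all relevant homogeneous primes. The reduction to orbits in the first paragraph also requires the fact that a torus-stable open subset is a union of orbits, which I would justify using the orbit-decomposition of $\mProj A$ afforded by its monomial primes.
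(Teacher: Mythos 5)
Your argument is correct and rests on the same combinatorial pivot as the paper's proof, namely the correspondence (Brenner--Schr\"oer's remark 3.7, recalled in remark \ref{rem:relconecorr}) between relevant monomials $\prod_{\rho\in J}x_\rho$ and linearly independent subsets $\{u_\rho\}_{\rho\in I\setminus J}$ of ray generators; your middle paragraph essentially re-derives that correspondence via the finite-index computation, and your computation is right. Where you diverge is in the reduction: the paper's forward direction covers $\mProj A$ by the $D_+(f)$ of relevant \emph{monomials}, identifies each $\sigma_f$ with a cone of the simplicially complete $\Delta$, and concludes $D_+(f)=D_+(\chi(h_{\sigma_f}))$ directly --- no orbit argument is needed. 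You instead test surjectivity of $\mu$ only on the monomial primes $\mathfrak{p}_S$, which requires the extra lemma that a nonempty torus-stable closed subset of $\mProj A$ contains a monomial prime; this is true (each chart $D_+(f)\cong\Spec A_{(f)}$ is an affine toric variety whose closed orbit point is a monomial prime), but it is precisely the step you flag as unproved, and it can be avoided entirely by arguing chart-by-chart as the paper does. Your backward direction coincides with the paper's: the monomial prime of a missing simplicial cone lies in $\mProj A$ but in no $D_+(\chi(h_\tau))$. Two small cautions: the paper's $A$ is the algebra of support functions $\C[\chi(K_\rho):\rho\in\Delta(1)]$ graded by $\Pic(\Delta)$ with $\deg\chi(K_\rho)=\lambda_\rho[D_\rho]$, not the Cox ring graded by the class group with $\deg x_\rho=[D_\rho]$ --- this changes nothing in the finite-index test but should be stated correctly; and your claim that $\mProj A$ is covered by the $D_+$ of relevant monomials (so that relevance ``may be tested on monomial primes'') is exactly the content of Brenner--Schr\"oer's proposition 3.4 and remark 3.7, which the paper also invokes, so you should cite it rather than leave it as an open worry.
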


Using this theorem we completely classify whether $\mu$ is an isomorphism or not when $X_{\Delta}$ is a complete nonsingular surface.

The paper has four sections including an introduction.

In the second section, we review the $\tProj$ construction of a multigraded ring from Perling \cite{perling:toricvarhomprimespec}. We recall some results about $\strshf{\tProj}$-modules from Kajiwara \cite{kajiwara:funtorvar}. We do not present any new results here.

The third section consists of a detailed description of multihomogeneous spaces, first described by Brenner and  Schr\"oer \cite{breshr:ampfamilies}. We discuss some properties of multihomogeneous spaces and recall the functor $\widetilde{(\ )}$ from  the category of graded $A$-modules to the category of $\strshf{\mProj A}$-modules. We construct the functor $\Gamma_{*}(\mProj A, \underline{\ })$ from the category of $\strshf{\mProj A}$-modules to the category of graded $A$-modules. We use this to show that $\widetilde{(\ )}$ is essentially surjective.

In the fourth section, we establish the open embedding $\mu$ from $\tProj$ to $\mProj$ and give a criterion for the embedding to be an isomorphism.

\section{Some preliminary notions and results}
\subsection{Perling's construction ($\tProj$)}

This section is a review of Perling's reconstruction
\cite{perling:toricvarhomprimespec} of a toric variety $X$ from a
multigraded ring associated with $X$ via a generalization of the Proj
construction, which he denotes by $\tProj$.

A toric variety $X$ is a normal variety with an action of a
dense open torus such that the torus action on the variety restricts to the
multiplication on the torus.
The category of toric varieties is equivalent to
the category of pairs $(N, \Delta)$ where $N$ is a lattice (free abelian
group of finite rank) and $\Delta$ is a fan in the real vector space
$N_{\R}$.
The toric variety corresponding to $\Delta$ will be denoted by $X_{\Delta}$.
We denote the $\Z$-dual of $N$ by $M$ .
See \cite{fulton:toric} and \cite{cls:toricvar} for the results on toric
varieties used here.

We have the following assumption on the fans associated with the toric
varieties. Recall that support of a fan is the union of all its cones.

\begin{assumption}\label{asm:fullfan}
 The support of $\Delta$ generates the vector space $\NReal$. 
\end{assumption}

Define $\Delta(1) := \{\rho \in \Delta: \dim(\rho) = 1\}$ the set of
$1$-dimensional cones in $\Delta$ and we denote $n_\rho$ the primitive
element of $\rho$.
For each $\rho \in \Delta(1)$, we denote the associated
$T$-invariant prime Weil divisor on $X_{\Delta}$ by $D_{\rho}$.
$\TCDiv(X_{\Delta})$ denotes the free abelian group generated by
$T$-invariant Cartier divisors on $X_{\Delta}$.
We further assume the following
\begin{assumption} \label{asm:frpicgp}
  The Picard group $\Pic(\Delta)$ of $X_{\Delta}$ is free.
\end{assumption}

The Picard group fits into an exact sequence (see \cite[section
3.4]{fulton:toric} or \cite{oda:torembed}),
\begin{equation*} \label{eq:tcdiv}
    1 \xrightarrow{} M \xrightarrow{\Div} \TCDiv(\Delta) \xrightarrow{\deg}
    \Pic(\Delta) \xrightarrow{} 1. 
\end{equation*}

\begin{definition}
  A $\Delta$-linear support function $h$ is a function from the support
  $\supp{\Delta}$ to $\R$ such that $h$ is linear on each cone $\sigma \in
  \Delta$ and sends integral points in $\supp{\Delta}$ to $\Z$.
\end{definition}

We denote by $\SF(\Delta)$ the free abelian group of finite rank of $\Delta$
linear support functions.
It is well known (see \cite[section 3.4]{fulton:toric}) that such functions
correspond to the Cartier divisors.
Therefore $\TCDiv(\Delta) \cong \SF(\Delta)$ is an isomorphism of abelian groups.
Denote the real vector space of support functions by $\SF(\Delta)_{\R}:=
\SF(\Delta) \otimes_{\Z} \R$. There is a split short exact sequence of
abelian groups
\begin{equation} \label{eq:splexseq}
    1 \xrightarrow{} M \xrightarrow{\Div} \SF( \Delta) \xrightarrow{\deg}
    \Pic(\Delta) \rightarrow{} 1,
\end{equation}
and a corresponding  exact sequence of tori  
 \[
 1 \xrightarrow{} G  \xrightarrow{} \Tilde{T}  \xrightarrow{} T  \xrightarrow{} 1,
 \]
where
$G := \Spec \C[\Pic(\Delta)],
\Tilde{T} := \Spec \C[\SF( \Delta)]$
and $T := \Spec k[M]$.

\begin{definition}\label{def:suppsuppfun}
  For a support function $h \in \SF(\Delta)_{\R}$, we define its support as $\supp{h} :=
  \{\rho \in \Delta(1): h(n_{\rho}) \not= 0\}$.
\end{definition}

For each ray $\rho \in \Delta(1)$, the spaces $H_{\rho} := \{h \in \SF(
\Delta)_{\R}: h(n_\rho) \geq 0\} \subset \SF(\Delta)_{\R}$ are half spaces
\cite[proposition 3.2]{perling:toricvarhomprimespec} whose boundaries
$\partial H_\rho$ are rational hyperplanes.

Our results, which depend on Perling and Kajiwara's results, require the
condition of having enough invariant Cartier divisors, which we define now.

\begin{definition}\cite[proposition 3.3]{perling:toricvarhomprimespec}
  A toric variety $X_{\Delta}$, corresponding to a fan $\Delta$ in a lattice
  $N$, \emph{has enough invariant Cartier divisors} if for each $\sigma \in
  \Delta$ there exists an effective $T$-invariant Cartier divisor whose
  support is precisely the union of $D_\rho$ for $\rho \notin \sigma(1)$.
\end{definition}

\begin{remark}
  For cone $\sigma$ if $D$ is such a Cartier divisor and it corresponds to the support function $h$ then
  $\supp{h}= \Delta(1) \setminus \sigma(1)$. This definition agrees with that of
  Kajiwara \cite[definition 1.5]{kajiwara:funtorvar} in terms of good
  cones.
\end{remark}

\begin{example}
  Simplicial toric varieties have enough invariant Cartier divisors (see
   \cite[lemma 3.4]{cox:homcoordrng}). 
\end{example}

\begin{assumption}
  \label{ass:enufcartier}
  We assume the toric variety $X_{\Delta}$ has enough invariant Cartier
  divisors.
\end{assumption}
The set $C = \dual{\left(\bigcap_{\rho \in \Delta(1)} H_\rho\right)}$ is
a pointed strongly convex rational polyhedral cone in $\dual{\SF( \Delta)_{\R}}$.
For each $\rho \in \Delta(1)$, let $l_\rho$ be the primitive element of the
ray orthogonal to $H_\rho$ in $\dual{\SF( \Delta)_{\R}}$ which takes
positive values on $C$.
Define $\hat{\sigma}$ to be the cone generated by the $l_{\rho}$
corresponding to the rays in $\sigma$, i.e.\ $\langle \set{l_\rho}{\rho \in
\sigma(1)} \rangle$.
Then $\hat{\Delta} = \set{\hat{\sigma}}{\sigma \in \Delta}$ is a subfan of
$C$ (see proof of \cite[proposition 3.6]{perling:toricvarhomprimespec}) and 
the map of fans $(\dual{\SF( \Delta)_{\R}}, \hat{\Delta})
\longrightarrow (\NReal, \Delta)$ is surjective.
This induces the following diagram of toric morphisms
\begin{equation*}
  \begin{tikzcd}
    X_{\hat{\Delta}} \arrow[r, "\phi"] \arrow[d, "\pi"'] & X_C \\
    X_{\Delta} &
  \end{tikzcd}
\end{equation*}
where $\phi$ is an open morphism and $\pi$ is a quotient presentation in the
sense of \cite{ahs:quotprtorvar}.

\begin{remark}[cf.\ remark 3.8 of \cite{perling:toricvarhomprimespec}]
  \label{rem:simcon}
  When the fan $\Delta$ is simplicial then the quotient presentation $\pi :
  X_{\hat{\Delta}} \rightarrow X_{\Delta}$ is same as described by Cox
  \cite{cox:homcoordrng}.  Further $\hat{\Delta}$ is simplicial if and only
  if $\Delta$ is simplicial.
 \end{remark}

\begin{notation}
  \label{not:tprojgitfan}
The coordinate ring $A := \C[\dual{C} \cap \SF(\Delta)] $ of the affine
toric variety $X_C$ is $\Pic(\Delta)$-graded $\C$-algebra given by the
homomorphism $\deg$ in \ref{eq:splexseq}. For a support function $h \in
\SF(\Delta)$ we denote $\chi(h)$ the corresponding homogeneous element in
$A$. Let $\omega$ be the corresponding weight cone in the real vector space
$\Pic(\Delta)_{\R}$.
\end{notation}

Recall that $X_{\hat{\Delta}} = \bigcup_{\hat{\sigma} \in \hat{\Delta}}
X_{\hat{\sigma}}$.
Let $B_\sigma$ be the defining ideal for $X_C \setminus X_{\hat{\sigma}}$
and $B := \sum_{\sigma \in \Delta}B_{\sigma} \subset A$.
Then, $B$ has codimension at least 2 in $A$ and $V(B) = X_C \setminus
X_{\hat{\Delta}}$ proving that $\Gamma(X_{\hat{\Delta}},
\strshf{X_{\hat{\Delta}}}) = A$.
Furthermore, the dual action of $G$ on $A$ induces the isotypical
decomposition
\[
  A = \bigoplus_{\alpha \in \Pic(\Delta)} A_{\alpha}.
\]

Under the order reversing correspondence between faces of $C$ and faces of
$\dual{C}$, $\hat{\sigma} \in \hat{\Delta}$ corresponds to
$\Sigma_{\sigma}:= \dual{C} \cap {\hat{\sigma}}^{\perp}$.
For each $\hat{\sigma} \in \hat{\Delta}$, we fix a support function
$h_{\sigma} \in \relint(\Sigma_{\sigma})$.
Then
\begin{equation*}
  \supp{h_{\sigma}} = \Delta(1) \setminus \sigma(1). 
\end{equation*}
For each $\sigma \in \Delta$, let $\chi(h_{\sigma}) \in A$ be the element
corresponding to $h_{\sigma}$.
Let $A_{(\chi(h_{\sigma}))}$ be the subring of degree zero elements in
$\Pic(\Delta)$-graded localized ring $A_{\chi(h_{\sigma})}$.
Then we have $A^{G}_{\chi(h_{\sigma})} \cong A_{(\chi(h_{\sigma}))} \cong
k[\sigma_M]$ where $\sigma_M := \dual{\sigma} \cap M$ for $\sigma \in \Delta$
\cite[lemma 3.10]{perling:toricvarhomprimespec}.
Thus, $(X_{\Delta}, \pi)$ is a categorical quotient of the action of $G$ on
$X_{\hat{\Delta}}$ \cite[proposition 3.11]{perling:toricvarhomprimespec}.

As a topological space, 
\begin{equation*}
  \tProj A = \set{\wp \in X_{\hat{\Delta}}}{\wp \text{ is a homogeneous
  prime ideal in $A$}}.
\end{equation*}
Let $i \colon \tProj A \longrightarrow X_{\hat{\Delta}}$ be the canonical embedding.
One constructs a sheaf of rings $\strshf{}'$ on $X_C$ which for an open set
$U$, is the ring $\strshf{}'(U)$ consisting of those sections of
$\strshf{}(U)$, which locally are fractions of homogeneous elements of
degree $0$.
Perling \cite[definition 3.15]{perling:toricvarhomprimespec} defines $\strshf{\tProj A} = i^{-1}\strshf{}^{'}$ on $\tProj A$.
The ringed space $\tProj A = (\tProj A, \strshf{\tProj A})$ is called the
toric proj of $A$, and Perling proves that this is indeed a scheme.
He shows that if $\tau < \sigma$ in $\Delta$ and $h_\tau$ and $ h_\sigma$
are support functions vanishing on $\tau(1)$ and $\sigma(1)$, respectively
then we have the following diagram (see \cite[theorem
3.18]{perling:toricvarhomprimespec})
 \begin{equation} \label{fig:comdig}
    \begin{tikzcd}
             \tProj A_{(\chi(h_{\tau}))} \arrow[hook]{d} \arrow{r}{\cong} & D_{+}(\chi(h_{\tau})) \arrow[hook]{d} \arrow{r}{\cong} & \Spec A_{(\chi(h_{\tau}))} \arrow{r}{\cong} \arrow[hook]{d} & \Spec k[\tau_M]   \arrow[hook]{d}   \\
         \tProj A_{(\chi(h_{\sigma}))}\arrow{r}{\cong}  &
	 D_{+}(\chi(h_{\sigma})) \arrow{r}{\cong} & \Spec
	 A_{(\chi(h_{\sigma}))} \arrow{r}{\cong}  & \Spec k[\sigma_M].
    \end{tikzcd}
\end{equation}
This establishes that $\tProj A$ is a scheme and is isomorphic to the toric
variety. In fact, it presents $X_{\Delta}$ as a geometric quotient of
$X_{\hat{\Delta}}$ by $G$ \cite[theorem 3.19]{perling:toricvarhomprimespec}.
   
The advantage of this presentation is that there is an essentially
surjective functor from the category of $\Pic(\Delta)$-graded $A$-modules to
quasi-coherent $\strshf{\tProj}$-modules. More explicitly, for a graded
$A$-module $F$, the $\widetilde{(\ )}$ construction
\cite[definition 3.21]{perling:toricvarhomprimespec} gives a
quasi-coherent $\strshf{\tProj}$-module $\widetilde{F}$ on $\tProj A$. For
each $\alpha \in \Pic{(\Delta)}$, we have the graded $A$-modules $A(\alpha)$
with decomposition $A(\alpha) := \oplus_{\beta \in
\Pic(\Delta)}A(\alpha)_{\beta}$ where $\beta \in \Pic(\Delta)$ and
$A(\alpha)_{\beta}:= A_{\alpha + \beta}$. The associated quasi-coherent
$\strshf{\tProj}$-module $\strshf{\tProj A}(\alpha) : = \Tilde{A(\alpha)}$
is an invertible sheaf \cite[proposition 2.6(1)]{kajiwara:funtorvar} for
each $\alpha \in \Pic(\Delta)$. Like projective spaces, for $\alpha , \beta
\in \Pic(\Delta)$ we have the isomorphisms $\strshf{\tProj A}(\alpha +
\beta) \cong \strshf{\tProj A}(\alpha) \otimes_{\strshf{\tProj
A}}\strshf{\tProj A}(\beta)$ \cite[corollary 2.8]{kajiwara:funtorvar} and
$\widetilde{F(\alpha)} \cong \widetilde{F} \otimes_{\strshf{\tProj A}}
\strshf{\tProj A}(\alpha) $ for a graded $A$-module $F$
\cite[corollary 2.9]{kajiwara:funtorvar}. 
    
On the other hand, for a quasi-coherent $\strshf{\tProj}$-module
$\mathscr{F}$, there is a graded $A$-module $\Gamma_{*}(\tProj
A,\mathscr{F}) := \bigoplus_{\alpha \in \Pic(\Delta)} \Gamma(\tProj A,
\mathscr{F}(\alpha))$ where $\mathscr{F}(\alpha) := \mathscr{F}
\otimes_{\strshf{\tProj A}} \strshf{\tProj A}(\alpha)$ with
$\widetilde{\Gamma_{*}(\tProj A,\mathscr{F})} \cong \mathscr{F}$. The
functors $\widetilde{(\ )}$ and $\Gamma_{*}(\tProj A, \underline{\ })$  are adjoint functors
while the former one is left adjoint, and the later one is right adjoint.
  
\subsection{Multihomogeneous Spaces}
This subsection is a  review the theory of multihomogeneous spaces. For more
details regarding multihomogeneous spaces, we refer to \cite[section
2]{breshr:ampfamilies}. See also \cite{MR3307753} for results on the geometry of
multigraded algebras and their properties.

\begin{definition}
  \label{def:prdrlelt}\label{def:muhospcs}
  Let $A = \bigoplus_{d \in D} A_d$ be a ring graded by a finitely generated
  abelian group $D$. A homogeneous element $f \in A$ is relevant if the
  $D$-graded ring $A_f$ (with the induced grading) is periodic, i.e.\ the
  subgroup of $D$ consisting of all degrees of homogeneous invertible
  elements is of finite index. Let $A_{(f)}$ denote the degree zero part of
  $A_f$. Then, (see \cite[lemma 2.1]{breshr:ampfamilies}) $D_+(f) := \Spec A_f
  \longrightarrow \Spec A_{(f)}$ turns out to be a geometric quotients. Brenner
  and Schr{\"o}er (loc. cit.) defines
  \begin{equation*}
    \mProj A = \bigcup_{
      \substack{
	f \in A \\
	f \text{ is relevant}
      }
    } D_+(f)
  \end{equation*}
  where the union takes place in the universal quotient in the category of
  ringed spaces.
\end{definition}

For further properties, we refer to \cite{vk:mhstvar}. We collect some results
which we will find useful.
\begin{remark}
  \label{rem:ptsinmhs}
  The points in a multihomogeneous projective space $\mProj A$ of a
  $D$-graded ring $A$ correspond to homogeneous ideals in A which may not be
  prime (see \cite[remark 2.3]{breshr:ampfamilies}). However, these ideals
  have the property that all the homogeneous elements in the complement form
  a multiplicatively closed set.
\end{remark}
 
\begin{remark}
  \label{rem:prjempty}
  The way $\mProj A$ is defined for a $D$-graded ring $A$, it can happen
  that $A$ has no relevant element and then $\mProj A = \emptyset$. If
  $A$ is a finitely generated algebra over $A_0$, one sufficient condition
  for the existence of relevant elements is that there exists a collection
  of homogeneous generators $\set{x_i}{1 \leq i \leq r}$ such that
  $\set{\deg x_i}{1 \leq i \leq r}$ generates a finite index subgroup in
  $D$. This condition is easy to check, for example, when $A$ is the
  polynomial ring over $\C$.
\end{remark}

\begin{remark}
  \label{rem:frelgmqt}
  By \cite[lemma 2.1]{breshr:ampfamilies},  the map $\Spec A_f
  \longrightarrow \Spec A_{(f)}$, which is induced by the inclusion $A_{(f)}
  \hookrightarrow A_f$, is a geometric quotient.
\end{remark}

By definition, the collection of affine open subschemes
\[
\set{D_+(f)}{f \in A \text{ is homogeneous and relevant}}
\]
covers $\mProj A$. We state the following easy fact for subsequent use.

\begin{lemma}
  \label{lem:dpfdpgfg}
  With the notation as above, $D_+(f) \cap D_+(g) = D_+(fg) \subset \mProj
  A$.
\end{lemma}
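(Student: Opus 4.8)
The plan is to prove the statement in two stages: first the equality of underlying point sets, then its promotion to an equality of open subschemes. Throughout write $\deg f$ and $\deg g$ for the degrees of the homogeneous elements $f, g \in A$, so that $fg$ is homogeneous of degree $\deg f + \deg g$. Before anything else I would check that $fg$ is itself relevant, so that $D_+(fg)$ is defined: since $A_{fg} = (A_f)_g$ is a localization of $A_f$, every homogeneous invertible element of $A_f$ stays invertible in $A_{fg}$, so the subgroup of $D$ consisting of degrees of homogeneous units of $A_{fg}$ contains the corresponding subgroup for $A_f$. The latter has finite index because $f$ is relevant (definition \ref{def:prdrlelt}), hence $A_{fg}$ is periodic and $fg$ is relevant.

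For the set-theoretic equality I would use the description of points from remark \ref{rem:ptsinmhs}: a point corresponds to a homogeneous ideal $\wp$ whose homogeneous complement is multiplicatively closed, and $\wp \in D_+(f)$ exactly when $f \notin \wp$. The equivalence ``$f \notin \wp$ and $g \notin \wp$'' $\iff$ ``$fg \notin \wp$'' is then immediate: the forward direction is multiplicative closedness of the homogeneous complement, and the reverse uses that $\wp$ is an ideal, so $f \in \wp$ (or $g \in \wp$) forces $fg \in \wp$. This gives $D_+(f) \cap D_+(g) = D_+(fg)$ as subsets of $\mProj A$.

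The main work, and the step I expect to be the real obstacle, is upgrading this to an equality of open subschemes, i.e.\ identifying $A_{(fg)}$ as a localization of $A_{(f)}$. The difficulty is specific to the multigraded setting: there need not be any degree-zero element built from $g$ and powers of $f$ alone, so one cannot simply invert a single ``$g/f^{k}$'' as in the classical $\Z$-graded $\Proj$. Periodicity of $A_f$ is what rescues the argument. Since $f$ is relevant, the finite-index subgroup $D' \leq D$ of degrees of homogeneous units of $A_f$ has finite index $m := [D:D']$, whence $m\,\deg g \in D'$ and there is a homogeneous unit $u \in A_f$ with $\deg u = m\,\deg g$. Then $v := u^{-1} g^{m}$ already lies in $A_f$ (it is a product of $g^{m} \in A$ and $u^{-1} \in A_f$), has degree $0$, and so lies in $A_{(f)}$, while becoming invertible in $A_{(fg)}$. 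I would then verify, by a direct rewriting that clears powers of $g$ from denominators in multiples of $m$ and absorbs the remainder using $u$, that the natural inclusion $A_{(f)} \hookrightarrow A_{(fg)}$ induces an isomorphism $A_{(fg)} \cong \left(A_{(f)}\right)_{v}$.

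Granting this identification, $D_+(fg) = \Spec A_{(fg)}$ is the principal open $D(v) \subseteq \Spec A_{(f)} = D_+(f)$, hence an open subscheme of $D_+(f)$, and by symmetry also an open subscheme of $D_+(g)$. Since $\mProj A$ is obtained by gluing the affine opens $D_+(f)$ as open subschemes, the set-theoretic intersection $D_+(f) \cap D_+(g)$ carries a unique open-subscheme structure, and the localization computation exhibits $D_+(fg)$ with exactly that structure inside both charts. This yields the asserted equality $D_+(f) \cap D_+(g) = D_+(fg)$ of open subschemes of $\mProj A$.
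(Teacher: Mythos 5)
Your proof is correct, but it follows a genuinely different route from the paper's. The paper's argument is top-down: it notes that $\Spec A_{fg} = \Spec A_f \cap \Spec A_g$ inside $\Spec A$, and then invokes the fact (remark \ref{rem:frelgmqt}, from Brenner--Schr\"oer) that each $D_+(h) = \Spec A_{(h)}$ is a geometric quotient of $\Spec A_h$ by $\Spec A_0[D]$, so that the intersection of invariant opens descends to the intersection of the quotients. Your argument instead works entirely downstairs: a point-set computation with the homogeneous ideals $\wp$ of remark \ref{rem:ptsinmhs}, followed by an explicit identification $A_{(fg)} \cong \bigl(A_{(f)}\bigr)_{v}$ with $v = u^{-1}g^{m}$, where $u$ is a homogeneous unit of $A_f$ of degree $m\deg g$ supplied by periodicity. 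That identification does go through as you sketch it: surjectivity from $a/g^{k} = \bigl(a\,g^{(m-1)k}u^{-k}\bigr)/v^{k}$, injectivity because $g^{N}b=0$ in $A_f$ forces $v^{N}b=0$ in $A_{(f)}$. Your approach buys two things the paper's proof does not make explicit: the verification that $fg$ is itself relevant (which is needed for $D_+(fg)$ to be a legitimate chart and is silently assumed in the paper), and the sharper statement that $D_+(fg)$ is a \emph{distinguished} affine open $D(v)$ in $D_+(f)$, which is exactly the input one needs to see directly that the charts glue to a scheme. The paper's proof is shorter because it delegates the descent to the geometric-quotient machinery; yours is longer but self-contained and avoids quoting that property. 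The only soft spot is the final gluing paragraph, where the uniqueness of the open-subscheme structure on the set-theoretic intersection tacitly uses that $\mProj A$ is already known to be a scheme with these charts; that is legitimate here since it is part of the Brenner--Schr\"oer setup the lemma lives in.
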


\begin{proof}
  This is implicit in \cite[proposition 3.1]{breshr:ampfamilies}. Note that
  for relevant elements $f$ and $g$ in $A$, $\Spec A_{fg} = \Spec A_f \cap
  \Spec A_g$ as subschemes of $\Spec A$. Now $\Spec A_{(fg)}$, $\Spec
  A_{(f)}$ and $\Spec A_{(g)}$ are geometric quotients (see remark
  \ref{rem:frelgmqt}) under the action of $\Spec A_0[D]$ and hence $\Spec
  A_{(fg)} = \Spec A_{(f)} \cap \Spec A_{(g)}$ considered as a subscheme of
  $\mProj A$.
\end{proof}

For later, we record two results of Brenner and Schr{\"o}er regarding
finiteness.

\begin{lemma} \cite[lemma 2.4]{breshr:ampfamilies}
  \label{lem:noethcnd}
  Let $D$ be a finitely generated abelian group. A $D$-graded ring $A$
  is noetherian if and only if $A_0$ is noetherian and $A$ is an $A_0$ algebra
  of finite type.
\end{lemma}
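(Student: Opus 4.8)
The plan is to prove the two implications separately, with the reverse implication being the routine one. For $(\Leftarrow)$, suppose $A_0$ is noetherian and $A = A_0[f_1,\dots,f_r]$ for finitely many homogeneous $f_i$. Then $A$ is a quotient of the polynomial ring $A_0[X_1,\dots,X_r]$, which is noetherian by the Hilbert basis theorem, and quotients of noetherian rings are noetherian; no use of the grading is needed here. For $(\Rightarrow)$ I would assume $A$ noetherian and establish the two assertions ``$A_0$ is noetherian'' and ``$A$ is of finite type over $A_0$'' in turn.

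For the noetherianity of $A_0$, the idea is an extension--contraction argument that exploits the grading. Given an ascending chain $I_1 \subseteq I_2 \subseteq \cdots$ of ideals of $A_0$, extend each to the ideal $I_j A \subseteq A$. Since every element of $I_j$ is homogeneous of degree $0$, the ideal $I_j A$ is graded with degree-$d$ component $I_j A_d$; in particular its degree-$0$ component is $I_j A_0 = I_j$, so that $I_j A \cap A_0 = (I_j A)_0 = I_j$. Because $A$ is noetherian the chain $\{I_j A\}$ stabilizes, and contracting back to $A_0$ via the identity $I_j = I_j A \cap A_0$ shows that the original chain stabilizes. Hence $A_0$ is noetherian. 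This is precisely the step where the hypothesis that $A_0$ is literally the degree-zero part, rather than merely some subring, is used: the degree-$0$ projection $A \to A_0$ is an $A_0$-module retraction even though it fails to be a ring homomorphism.

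The main obstacle is showing that $A$ is a finitely generated $A_0$-algebra; it suffices to produce finitely many homogeneous elements generating $A$ over $A_0$. When the grading is positive, that is $\N$-graded, or $\N^n$-graded with pointed support, this is the classical graded-Nakayama argument: the irrelevant ideal $A_+ = \bigoplus_{d > 0} A_d$ is finitely generated because $A$ is noetherian, say by homogeneous $f_1,\dots,f_r$, and induction on degree shows $A = A_0[f_1,\dots,f_r]$, since each homogeneous $y$ of degree $d>0$ lies in $A_+$ and is thus an $A$-combination of the $f_i$ whose coefficients have strictly smaller degree. The difficulty for a general finitely generated abelian $D$ is that no such positivity is available, and the degree induction need not terminate when positive and negative degrees coexist, as already in $\C[t,t^{-1}]$. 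I would therefore reduce to the positive case in two stages: first coarsen the grading along $D \twoheadrightarrow D/D_{\mathrm{tors}} \cong \Z^n$, reducing the torsion part $\bigoplus_{t \in D_{\mathrm{tors}}} A_t$, graded by the finite group $D_{\mathrm{tors}}$, to a module-finiteness statement over $A_0$ via integrality (each homogeneous $x$ satisfies $x^{\mathrm{ord}(\deg x)} \in A_0$); and then decompose $\Z^n$ into its finitely many coordinate orthants, on each of which the support is pointed and the graded-Nakayama argument applies, taking the union of the resulting finite generating sets. The technical heart, and the step I expect to be hardest, is transferring the noetherian hypothesis from $A$ to these positively graded subrings: the orthant projections are not ring maps and contraction of extended ideals can fail, so this transfer must be handled with care to make the reduction rigorous.
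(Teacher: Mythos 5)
The paper offers no proof of this statement; it is quoted verbatim from Brenner and Schr\"oer (their lemma 2.4), so there is no in-paper argument to compare yours against. Judged on its own terms, your proposal is complete and correct for the reverse implication (Hilbert basis theorem) and for the noetherianity of $A_0$: the contraction identity $I_jA \cap A_0 = (I_jA)_0 = I_j$ is exactly the right use of the grading. The gap is in the finite-generation half, and it sits precisely where you flag it: you never show that the orthant subrings $\bigoplus_{d \in \tau} A_d$ inherit noetherianity from $A$ (nor finite generation over $A_0$ by any other route), and without that the graded-Nakayama induction on each orthant has no starting point. This is not a formality: already for a noetherian $\Z$-graded ring, the noetherianity of $A_{\geq 0}$ is essentially the whole content of the theorem (Goto--Yamagishi), and, as you observe, extension--contraction of ideals fails for this subring. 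The torsion step as proposed is also incomplete, since integrality of $\bigoplus_{t \in D_{\mathrm{tors}}} A_t$ over $A_0$ does not by itself yield module-finiteness.

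The missing idea that makes the reduction work is to stay inside the full ring $A$ and use two facts. First, for every $d \in D$ the component $A_d$ is a finitely generated $A_0$-module: the ideal $A \cdot A_d$ is generated by finitely many elements $z_1,\dots,z_k$ of $A_d$ (a graded ideal generated by a set of homogeneous elements is, when finitely generated, generated by finitely many members of that set), and comparing degree-$d$ parts gives $A_d = \sum_j A_0 z_j$. This already disposes of the torsion part, with no integrality needed. Second, in the $\Z$-graded case take homogeneous generators $x_1,\dots,x_s$ of positive degrees $d_1,\dots,d_s$ for the ideal $A\cdot A_{\geq 1}$ of $A$ itself, not of $A_{\geq 0}$; with $d=\max_i d_i$, every $n>d$ satisfies $A_n=\sum_i A_{n-d_i}x_i$ with $0<n-d_i<n$, so strong induction gives $A_{\geq 0}=\sum_{j=0}^{d}A_0[x_1,\dots,x_s]\,A_j$, a finite module over a finitely generated $A_0$-algebra. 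The same applies to $A_{\leq 0}$, these two subrings generate $A$, and induction on the rank of $D/D_{\mathrm{tors}}$ (coarsening one $\Z$-factor at a time) finishes the proof. Your outline points in a workable direction, but as written the hardest step is asserted rather than proved.
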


\begin{proposition} \cite[proposition 2.5]{breshr:ampfamilies}
  \label{pro:unclfntp}
  Suppose $A$ is a noetherian ring graded by a finitely generated abelian
  group $D$. Then the morphism $\varphi \colon \mProj A \longrightarrow
  \Spec A_0$ is universally closed and of finite type.
\end{proposition}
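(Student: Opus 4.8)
The plan is to verify the two properties separately, after using Lemma \ref{lem:noethcnd} to fix homogeneous $A_0$-algebra generators $y_1,\dots,y_r$ of $A$ of degrees $d_1,\dots,d_r\in D$. Since every homogeneous element of $A$ has degree in the subgroup generated by the $d_i$, relevance of any $f$ (whose degrees of units all lie in that subgroup) forces $\langle d_1,\dots,d_r\rangle$ to have finite index in $D$; in particular I may assume $\mProj A\neq\emptyset$, the empty case being trivial. The map $\varphi$ itself is the one glued from the inclusions $A_0\hookrightarrow A_{(f)}$, i.e.\ from $D_{+}(f)=\Spec A_{(f)}\to\Spec A_0$.

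For \emph{finite type}, I would first show each $A_{(f)}$ is a finitely generated $A_0$-algebra. Writing $A_f=A_0[y_1,\dots,y_r,z]$ with $z=f^{-1}$ of degree $-\deg f$, the ring $A_{(f)}$ is the $A_0$-subalgebra generated by the degree-zero monomials $y^{\alpha}z^{k}$, that is, by those $(\alpha,k)\in\N^{r+1}$ lying in the kernel of the map $\N^{r+1}\to D$, $(\alpha,k)\mapsto\sum_i\alpha_i d_i-k\deg f$. By Gordan's lemma this monoid is finitely generated, hence so is $A_{(f)}$ over $A_0$, and each chart map is of finite type. For quasi-compactness I would let $\mathfrak b\subseteq A$ be the ideal generated by all relevant homogeneous elements; since $A$ is noetherian, $\mathfrak b=(f_1,\dots,f_s)$ with each $f_j$ relevant. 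Every relevant $f$ lies in $\mathfrak b$, so a point of $\mProj A$ belonging to $D_{+}(f)$ cannot contain all of the $f_j$; hence $\mProj A=\bigcup_{j}D_{+}(f_j)$ is a finite cover and $\varphi$ is of finite type.

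For \emph{universal closedness} I would use the valuative criterion. Since $\varphi$ is quasi-compact, it suffices that for every valuation ring $R$ with fraction field $K$, every $A_0\to R$, and every $K$-point $\Spec K\to\mProj A$ compatible over $\Spec A_0$, there is, after replacing $R$ by a dominating valuation ring $R'$ of an extension $K'$, a lift $\Spec R'\to\mProj A$. The $K$-point lands in some chart $D_{+}(f)$; pulling it back along the geometric quotient $\Spec A_f\to D_{+}(f)$ of Remark \ref{rem:frelgmqt} and enlarging $K$ if necessary, I obtain a graded homomorphism $\Psi\colon A\to K'$ lifting it, well defined up to the action of a character $\lambda\in\Hom(D,K'^{\times})$ (the $K'$-points of $G:=\Spec A_0[D]$). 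Clearing denominators amounts to choosing $\lambda$ so that $\lambda\cdot\Psi$ carries all of $A$ into $R'$: with $v'$ the valuation, $\gamma_i:=v'(\Psi(y_i))$ and $\ell:=v'\circ\lambda\colon D\to\Gamma'$, this is exactly the linear system $\ell(d_i)\ge-\gamma_i$ for all $i$.

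Feasibility of this system I would settle by Farkas' lemma: the only possible obstruction is a nonnegative real relation $\sum_i\nu_i d_i=0$ with $\sum_i\nu_i\gamma_i<0$, but the cone of such $\nu$ is rational (Gordan), and for an integral nonnegative relation the monomial $\prod_i y_i^{\nu_i}$ has degree $0$, hence lies in $A_0$ and maps into $R'$, forcing $\sum_i\nu_i\gamma_i\ge0$; so no obstruction exists and a feasible $\ell$ can be found after making $\Gamma'$ divisible by extending $R'$. The remaining — and, I expect, hardest — point is to pin the special fibre inside $\mProj A$: once $\lambda\cdot\Psi$ lands in $R'$ the generic point already sits in $D_{+}(f)$, but I must also arrange that some relevant $g$ has $v'((\lambda\cdot\Psi)(g))=0$, so that the closed point of $\Spec R'$ lies in the relevant locus rather than being killed. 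I would achieve this by choosing $\ell$ not in the interior but on a face of the feasible polyhedron along which the constraints indexed by a relevant subset $I\subseteq\{1,\dots,r\}$ (one with $\{d_i\}_{i\in I}$ of finite index, so that $\prod_{i\in I}y_i$ is relevant) become simultaneously tight; producing such a face, and a further extension of the valuation guaranteeing a rational solution there, is the crux of the argument and is precisely where the finite-index/relevance hypothesis is used. Granting this, $\lambda\cdot\Psi$ descends to the desired lift $\Spec R'\to\mProj A$, and universal closedness follows.
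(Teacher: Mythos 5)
The paper offers no proof of this proposition --- it is quoted verbatim from Brenner--Schr\"oer \cite[proposition 2.5]{breshr:ampfamilies} --- so the only meaningful comparison is with that source, whose strategy (finitely many charts $D_+(f_j)$ from noetherianity, finite generation of $A_{(f)}$ over $A_0$ via Gordan's lemma, then the valuative criterion with a twist by a character of $\Spec A_0[D]$) is exactly the one you outline. Your finite-type half is complete and correct: the monoid of degree-zero monomials in $y_1,\dots,y_r,f^{-1}$ is the intersection of $\N^{r+1}$ with the kernel of a lattice map to $D$, hence finitely generated, and the quasi-compactness argument via the ideal $\mathfrak{b}$ generated by all relevant elements is sound.

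The universal-closedness half, however, contains a genuine gap which you yourself flag with ``Granting this.'' After twisting $\Psi$ by $\lambda$ so that $\lambda\cdot\Psi(A)\subseteq R'$, you only have a morphism $\Spec R'\to\Spec A$; to descend it to $\mProj A$ you must exhibit a \emph{relevant} homogeneous $g$ with $v'\bigl(\lambda\cdot\Psi(g)\bigr)=0$, and this is precisely the step you do not carry out. The difficulty is real and has several components you do not address: (i) indices $i$ with $\Psi(y_i)=0$ impose no constraint on $\ell$ but must be excluded from the tight set $I$, since $\prod_{i\in I}y_i$ has to remain nonzero at the special point; (ii) to show that a minimal face of the feasible polyhedron has tight set $I$ with $\set{d_i}{i\in I}$ of finite index in $D$, one needs to know first that $\set{d_i}{\Psi(y_i)\neq 0}$ already generates a finite-index subgroup --- this is where the relevance of the chart $D_+(f)$ containing the generic point must be exploited, and it is not automatic from $\mProj A\neq\emptyset$; (iii) the chosen face point must be realized by an honest character $D\to (K')^{\times}$ after a suitable extension of the valuation. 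Without an argument producing such a face (e.g.\ passing to a minimal face and showing that any linear functional vanishing on the tight degrees must vanish on a finite-index subgroup, hence be zero), the special fibre could a priori land in the irrelevant locus and the lift would fail to exist. As it stands the proposal is a correct and well-organized reduction of the proposition to its hardest step, not a proof of it.
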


\begin{definition}  \cite[section 3]{breshr:ampfamilies},
  \label{def:simtorembed}
  Let $R$ be a ring, $M$ be a free abelian group of finite rank, and $N:=
  \Hom(M,\Z)$ be dual of $M$. Let $X$ be an $R$-scheme and $T:=\Spec R[M]$
  be the torus. A \emph{simplicial torus embedding} of torus $T$ is
  $T-$equivariant open map $T \hookrightarrow X$ locally given by semigroup
  algebra homomorphisms $R[\dual{\sigma} \cap M] \rightarrow R[M]$, where
  $\sigma$ is a strongly convex simplicial cone in $N$.
\end{definition}
 
 \begin{example}
   If $X$ is a simplicial toric variety, then $X$ is a simplicial torus embedding of the underlying torus. Apart from these, multihomogeneous spaces of $\Z^{n}$-graded polynomial algebras are another example.
 \end{example}
 
 Let $D$ be an abelian group of finite type and $A = k[x_1, \dots, x_n]$ be
 a $D-$graded polynomial $k-$algebra. Suppose the grading is given by a
 linear map $P: \Z^{n} \rightarrow D$ with finite co-kernel. Then we have
 the following exact sequence of abelian groups
 \begin{equation*}
     0 \rightarrow M \rightarrow Z^{n} \rightarrow D,
 \end{equation*}
  where $M$ is the kernel of $P$.
  
  \begin{proposition} \cite[proposition 3.4]{breshr:ampfamilies}
  Assume the above setting. Then $\mProj A$ is a simplicial torus embedding
  of the torus $\Spec k[M]$.
  \end{proposition}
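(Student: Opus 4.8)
The plan is to exhibit an explicit affine cover of $\mProj A$ by simplicial affine toric charts, each containing the torus $\Spec k[M]$ as its dense open orbit, and then to check that the embedding and transition data are the toric ones demanded by Definition~\ref{def:simtorembed}. Write $e_1,\dots,e_n$ for the standard basis of $\Z^n$, so $\deg x_i = P(e_i)$, and let $v_i \in N = \Hom(M,\Z)$ be the image of the coordinate functional $e_i^{*}$ under the restriction map $(\Z^n)^{*}\to\Hom(M,\Z)$. For $S\subseteq\{1,\dots,n\}$ put $x_S := \prod_{i\in S}x_i$ and call $S$ \emph{allowable} if $\set{\deg x_i}{i\in S}$ generates a finite-index subgroup of $D$. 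My first reduction is to prove that the relevant monomial charts $D_+(x_S)$, with $S$ allowable, already cover $\mProj A$; by Lemma~\ref{lem:dpfdpgfg} it then suffices to understand each such chart and their pairwise intersections.

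Next I would compute the charts. A Laurent monomial $x^{c}$ ($c\in\Z^n$) lies in $A_{x_S}$ exactly when $c_i\ge 0$ for $i\notin S$, and has degree $0$ exactly when $P(c)=0$, i.e.\ $c\in M$; since $\inpr{v_i}{c}=c_i$ for $c\in M$, this identifies $A_{(x_S)} = k[\dual{\sigma_S}\cap M]$, where $\sigma_S := \genby{\set{v_i}{i\notin S}}\subseteq N_{\R}$, so that $D_+(x_S)\cong\Spec k[\dual{\sigma_S}\cap M]$ by Definition~\ref{def:muhospcs}. The crucial point---and the step I expect to be the main obstacle---is that allowability of $S$ is equivalent to the vectors $\set{v_i}{i\notin S}$ being linearly independent, whence $\sigma_S$ is a strongly convex simplicial rational cone. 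I would prove this by dualizing: allowability means that $P_{\R}$ restricted to the coordinate subspace $\R^{S}$ is onto $D_{\R}$, which transposes (using that $P_\R$ is surjective, so $P_\R^{*}$ is injective) to the statement $M_{\R}^{\perp}\cap\operatorname{span}\set{e_i^{*}}{i\notin S}=0$ inside $(\R^n)^{*}$, where $M_{\R}^{\perp}=\im P_{\R}^{*}$. A relation $\sum_{i\notin S}\lambda_i v_i=0$ says precisely that $\sum_{i\notin S}\lambda_i e_i^{*}$ lies in $M_{\R}^{\perp}$ while visibly lying in $\operatorname{span}\set{e_i^{*}}{i\notin S}$; triviality of that intersection forces all $\lambda_i=0$. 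Hence each $D_+(x_S)$ is the affine toric variety of a simplicial cone, as required.

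To justify the covering reduction I would argue pointwise. A point $\wp\in\mProj A$ lies in some $D_+(f)$ with $f$ relevant and $f\notin\wp$; set $S_{\wp}:=\set{i}{x_i\notin\wp}$. By Remark~\ref{rem:ptsinmhs} the homogeneous elements outside $\wp$ are multiplicatively closed, so any homogeneous $g\notin\wp$ has a monomial summand outside $\wp$ whose support lies in $S_{\wp}$, giving $\deg g\in H:=\genby{\set{\deg x_i}{i\in S_{\wp}}}$. Every homogeneous unit of $A_f$ has the form $a/f^{s}$ with $a,f$ homogeneous and $a,f\notin\wp$, so its degree $\deg a-s\deg f$ lies in $H$; relevance of $f$ then forces $H$ to have finite index in $D$, i.e.\ $S_{\wp}$ is allowable. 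Since $x_{S_{\wp}}\notin\wp$, we get $\wp\in D_+(x_{S_{\wp}})$, proving the allowable monomial charts cover.

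Finally I would assemble the global picture. Taking $S=\{1,\dots,n\}$ gives $\sigma_S=\{0\}$, which is allowable because $P$ has finite cokernel, so $D_+(x_1\cdots x_n)=\Spec k[M]$ is the torus $T:=\Spec k[M]$ itself; by Lemma~\ref{lem:dpfdpgfg} this chart sits inside every $D_+(x_S)$, so $T$ is a common dense open subscheme and the localizations $k[\dual{\sigma_S}\cap M]\hookrightarrow k[M]$ are exactly the semigroup-algebra homomorphisms of Definition~\ref{def:simtorembed}. For the action and equivariance I would use that the big torus $\widetilde{T}=\Spec k[\Z^n]$ acts on $\Spec A$ and normalizes the grading action of $G=\Spec k[D]$; since each $D_+(x_S)$ is a geometric quotient of $\Spec A_{x_S}$ by $G$ (Remark~\ref{rem:frelgmqt}), the $\widetilde{T}$-action descends to the quotient torus $\widetilde{T}/\im(G)=\Spec k[M]=T$, for which $T\hookrightarrow\mProj A$ is the open-orbit inclusion and hence $T$-equivariant. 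This produces the simplicial torus embedding. I regard the equivalence ``allowable $\Leftrightarrow$ simplicial'' of the second paragraph as the conceptual core, while the covering argument of the third paragraph is the most delicate bookkeeping, since one must rule out that non-monomial relevant elements contribute points lying outside every monomial chart.
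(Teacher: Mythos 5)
Your argument is correct, but note that the paper does not prove this statement at all — it is quoted verbatim from \cite[proposition 3.4]{breshr:ampfamilies} — so there is no in-paper proof to compare against; what you have written is essentially a faithful reconstruction of Brenner--Schr\"oer's own argument (monomial charts $D_+(x_S)$ for allowable $S$ cover $\mProj A$, each equals $\Spec k[\dual{\sigma_S}\cap M]$ with $\sigma_S$ simplicial by the duality between finite index of $\genby{\deg x_i \colon i\in S}$ and linear independence of $\set{v_i}{i\notin S}$), and it is consistent with the combinatorial dictionary the paper records in Remark \ref{rem:relconecorr}. The only step you leave implicit is the one-line verification that an allowable $S$ makes $x_S$ relevant (the units $x_i$, $i\in S$, of $A_{x_S}$ already have degrees generating a finite-index subgroup); everything else, including the delicate pointwise covering argument via $S_{\wp}$ and the descent of the $\widetilde{T}$-action through the geometric quotient, is sound.
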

  
  \begin{remark} \cite[remark 3.7]{breshr:ampfamilies} \label{rem:relconecorr}
    Again we assume the above setting. Let $I = \{1, \dots, n\}$ be an index
    set and $N:= \Hom_{\Z}(M,\Z)$ be the dual of $M$. Let $\text{pr}_i:
    \Z^{n} \rightarrow \Z, i \in I$ be projections. To each subset $J
    \subset I$ we associate the cone $\sigma_J \subset \NReal$ generated by
    $\text{pr}_i|_{M}, i \in J$. Then $J \subset I$ with $\prod_{i \in
    J}x_i$ relevant in $A$ correspond to strongly convex simplicial cone
    $\sigma_{I\setminus J} \subset \NReal$. Let $\overline{\Delta}$ be the
    set of all strongly convex simplicial cones $\sigma_{J}$ such that
    $\prod_{i \in I\setminus J}x_i$ is relevant. In general,
    $\overline{\Delta}$ is not a fan in $\NReal$. 
  \end{remark}

\section{Sheaves associated to multigraded modules}
Consider a finitely generated abelian group $D$ and let $A$ be a $D$-graded
ring. Suppose $M = \bigoplus_{d \in D} M_d$ is a $D$-graded $A$-module. Just
as in the case of quasicoherent sheaves of modules over $\Proj$ of a $\N$-graded ring
\cite[definition before proposition 5.11, page 116]{hartshorne:alggeom},
we can construct $\cohshv{M}$.

 Since the points $p$ in
$\mProj A$ correspond to graded ideals $I_p$ in $A$ such that the homogeneous
elements in the complement $A \setminus I_p$ form a multiplicatively closed
set, it is still true that the stalk of the structure sheaf at $p$ is given
by $A_{(I_p)}$ (see remark \ref{rem:ptsinmhs}). One can now define
$\widetilde{M}$ in the same way by associating to $U \subset \mProj A$, the
$\strshf{\mProj A}(U)$-module of sections $s \colon U \to \coprod_{p \in U}
M_{(I_p)}$ satisfying the usual condition that locally such $s$ should be
defined by a single element of the form $m / a$ with $m \in M$ and
$a \in A$ but not in any of the ideals $I_p$. These modules are coherent
under some mild conditions, as we state below. Note that, given a $D$-graded
$A$-module $M= \bigoplus_{d \in D} M_d$ and an $e \in D$, one can define a
graded module $M(e)$ whereas $A$-modules $M(e) = M$, but $M(e)_d =
M_{d + e}\ \forall d \in D$.

\begin{lemma}
  \label{lem:cohshvms}
  Suppose $D$ is a finitely generated abelian group and $A$ is a $D$-graded
  integral noetherian ring. Then for $X = \mProj A$, the following hold
  \begin{enumerate}[nosep, label=(\textit{\alph*})]
    \item \label{ite:AtldisOX}
      $\widetilde{A} = \strshf{X}$. This allows us to define
      \begin{equation*}
	\strshf{X}(d) := \widetilde{A(d)}.
      \end{equation*}
      $\strshf{X}(d)$ is a coherent sheaf.
    \item \label{ite:MDpfMftd}
      For a $D$-graded $A$-module $M$, $\widetilde{M}$ is
      quasi-coherent and $\restrict{\widetilde{M}}{D_+(f)} \cong
      \widetilde{M_{(f)}}$ for any relevant element $f \in A$, where $\widetilde{M_{(f)}}$
      is the sheaf of modules over $\Spec A_{(f)}$ corresponding to the module $M_{(f)}$,
      the degree zero elements in $M_f$. Moreover, $\widetilde{M}$ is coherent whenever $M$ is finitely generated.
      \item The functor  $M \rightarrow \widetilde{M}$ is a covariant exact functor from the category of $D$-graded $A$-modules to the category of quasi-coherent $\strshf{X}$-modules, and commutes with direct limits and direct sums. 
  \end{enumerate}
\end{lemma}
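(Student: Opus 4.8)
The plan is to follow the classical template for the $\Proj$ of an $\N$-graded ring (\cite[Proposition 5.11]{hartshorne:alggeom}), replacing the role there played by "$f$ of positive degree'' with the relevance (periodicity) of $f$. Throughout I work with the affine open cover $\set{D_+(f)}{f \text{ is relevant}}$ of $X = \mProj A$, recalling from Lemma \ref{lem:dpfdpgfg} that $D_+(f) \cap D_+(g) = D_+(fg)$ and from remark \ref{rem:frelgmqt} that $D_+(f) \cong \Spec A_{(f)}$ as a geometric quotient. I would prove \ref{ite:MDpfMftd} first and deduce \ref{ite:AtldisOX} together with the coherence of $\strshf{X}(d)$ as special cases, and finally treat exactness in (c).

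For \ref{ite:MDpfMftd} the heart of the matter is the local identification $\restrict{\widetilde{M}}{D_+(f)} \cong \widetilde{M_{(f)}}$. I would construct the natural morphism of sheaves on $D_+(f) = \Spec A_{(f)}$ sending a local section $m/a$ of $\widetilde{M_{(f)}}$ to the section of $\widetilde{M}$ given by the same fraction, and check it is an isomorphism by comparing stalks. By the construction of $\widetilde{M}$ the stalk at $p \in D_+(f)$ is $M_{(I_p)}$, the degree-zero part of the localization of $M$ at the homogeneous elements outside $I_p$ (see remark \ref{rem:ptsinmhs}), while the stalk of $\widetilde{M_{(f)}}$ at the corresponding prime $\mathfrak p \subset A_{(f)}$ is $(M_{(f)})_{\mathfrak p}$. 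Everything therefore reduces to a natural isomorphism of $A_{(f)}$-modules
\begin{equation*}
  (M_{(f)})_{\mathfrak p} \;\cong\; M_{(I_p)}.
\end{equation*}

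This algebraic isomorphism is the step I expect to be the main obstacle. The map from left to right is clear: a fraction $(m/f^{k})/(a/f^{l})$ maps to $m f^{l}/(a f^{k})$, which lies in $M_{(I_p)}$ because $f \notin I_p$ and $A \setminus I_p$ is multiplicatively closed, and injectivity is a direct chase. The subtle point is surjectivity. Given $m/s \in M_{(I_p)}$ with $\deg m = \deg s = e$ and $s \notin I_p$, write $m/s = m s^{n-1}/s^{n}$. Since $A_f$ is periodic, the subgroup $H \subseteq D$ of degrees carrying a homogeneous unit of $A_f$ has finite index, so for suitable $n$ one has $n e \in H$; choosing a homogeneous unit $w \in A_f$ of degree $n e$, the elements $m s^{n-1} w^{-1}$ and $s^{n} w^{-1}$ are degree zero, lie in $M_{(f)}$ and $A_{(f)}$ respectively, the latter outside $\mathfrak p$, and their quotient recovers $m/s$. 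This is precisely where relevance rather than positivity of degree is used; in the $\N$-graded case one simply takes $w = f^{e}$. Granting the local identification, quasi-coherence of $\widetilde{M}$ is immediate as it is local and holds on each affine $D_+(f)$. For coherence when $M$ is finitely generated, Lemma \ref{lem:noethcnd} forces $A_0$ noetherian with $A$ of finite type over $A_0$, whence (using Proposition \ref{pro:unclfntp}) each $A_{(f)}$ is noetherian, and $M$ finitely generated over $A$ yields $M_{(f)}$ finitely generated over $A_{(f)}$ (again via periodicity of $A_f$), so $\widetilde{M_{(f)}}$ is coherent on each chart. Part \ref{ite:AtldisOX} then follows by taking $M = A$: here $\restrict{\widetilde{A}}{D_+(f)} \cong \widetilde{A_{(f)}} = \strshf{D_+(f)}$, and these glue to $\strshf{X}$; since $A(d)$ is a finitely generated $A$-module, $\strshf{X}(d) = \widetilde{A(d)}$ is coherent by what was just shown.

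Finally, for (c) I would test exactness and compatibility with direct limits and direct sums on the cover $\set{D_+(f)}{f \text{ is relevant}}$. Restricted to $D_+(f)$ the functor $\widetilde{(\ )}$ factors, by \ref{ite:MDpfMftd}, as $M \mapsto M_{(f)} \mapsto \widetilde{M_{(f)}}$. The first functor is exact and commutes with direct limits and sums because localization at $f$ is exact and commutes with colimits, while taking the degree-zero part is exact (a short exact sequence of graded modules is exact in each degree) and commutes with colimits; the second is the usual affine $\widetilde{(\ )}$ on $\Spec A_{(f)}$, with the same properties. Since exactness of a sequence of $\strshf{X}$-modules and the formation of colimits may both be checked on the members of an open cover, the global functor $M \mapsto \widetilde{M}$ is exact and commutes with direct limits and direct sums, completing the proof.
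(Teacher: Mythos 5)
Your proposal is correct and follows essentially the same route as the paper, which simply defers to the classical argument of Hartshorne's Proposition II.5.11 adapted to the cover by the $D_+(f)$ for relevant $f$. You in fact supply the one detail the paper leaves implicit — using periodicity of $A_f$ to produce a homogeneous unit $w$ of degree $ne$ so that $m/s = (ms^{n-1}w^{-1})/(s^nw^{-1})$ in the surjectivity step of $(M_{(f)})_{\mathfrak p}\cong M_{(I_p)}$ — and that is exactly the right replacement for "take $w=f^e$" in the $\N$-graded case.
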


The proof follows almost by definition and is very similar to the proof of
\cite[proposition 5.11]{hartshorne:alggeom}.  
\begin{remark}
Lemma \ref{lem:cohshvms} is known for toric varieties with enough invariant
Cartier divisors \cite[\S 2]{kajiwara:funtorvar}. This includes simplicial
toric varieties.
\end{remark}

\begin{remark}
Note that we have used $M$ as a lattice as well as $A$-module. Meaning should be clear from the context.
\end{remark}
\begin{remark}
  In general, the functor $\widetilde{(\ )}$ is not faithful, even for
  projective varieties, but it is full as we shall see later in proposition
  \ref{pro:tildefull}.
\end{remark}

The following three results are from \cite[section 3]{vk:mhstvar}.
\begin{theorem}
  \label{the:glsecdAd}
  Suppose $D$ is a \emph{free} finitely generated abelian group and $A =
  \bigoplus_{d \in D} A_d$ is a $D$-graded \emph{integral domain} which is
  finitely generated by homogeneous elements $x_1, \dotsc, x_r \in A$ over
  the ring $A_0$. Also assume that for all $k$, $1 \leq k \leq r$, the set
  $\set{\deg x_i}{1 \leq i \leq r, i \neq k}$ generates a finite index
  subgroup of $D$. Let $X = \mProj A$. Then $\Gamma(X, \strshf{X}(d)) \cong
  A_d$. Furthermore, $\strshf{X}(d)$ is a reflexive sheaf.
\end{theorem}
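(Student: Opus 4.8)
The plan is to compute the global sections of $\strshf{X}(d)$ by covering $X = \mProj A$ with the standard affine opens $D_+(x_k)$ and using a sheaf-axiom (separatedness) argument, then to identify the intersection of the relevant localizations inside the graded ring $A$. First I would recall from Lemma \ref{lem:cohshvms}\ref{ite:AtldisOX} and \ref{ite:MDpfMftd} that $\strshf{X}(d) = \widetilde{A(d)}$ restricts on each $D_+(x_k)$ to $\widetilde{A(d)_{(x_k)}}$, so that $\Gamma(D_+(x_k), \strshf{X}(d)) = (A(d))_{(x_k)} = (A_{x_k})_d$, the degree-$d$ piece of the localization $A_{x_k}$. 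The hypothesis that, for each $k$, the degrees $\set{\deg x_i}{i \neq k}$ generate a finite-index subgroup of $D$ is exactly what guarantees (cf.\ remark \ref{rem:prjempty}) that each $x_k$ is relevant, so these $D_+(x_k)$ genuinely cover $X$ and the computation is legitimate.

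Next I would use the separatedness of $X$ together with Lemma \ref{lem:dpfdpgfg}, which gives $D_+(x_k) \cap D_+(x_\ell) = D_+(x_k x_\ell)$, to write the sheaf-theoretic equalizer
\begin{equation*}
  \Gamma(X, \strshf{X}(d)) = \ker\!\left( \prod_k (A_{x_k})_d \rightrightarrows \prod_{k,\ell} (A_{x_k x_\ell})_d \right).
\end{equation*}
Since $A$ is an integral domain, all these localizations sit inside the graded fraction field of $A$, and the equalizer is simply the intersection $\bigcap_k (A_{x_k})_d$ taken inside that fraction field. Thus the problem reduces to the purely algebraic claim
\begin{equation*}
  \bigcap_{k=1}^{r} (A_{x_k})_d = A_d,
\end{equation*}
i.e.\ that a degree-$d$ fraction which is regular after inverting each single generator $x_k$ already lies in $A$. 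Here I expect the essential point, and the main obstacle, to be the following: given $a \in A$ with $a/x_k^{m_k}$ homogeneous of degree $d$ for every $k$, I must clear denominators compatibly. The key is that $A$ is finitely generated over $A_0$ by the $x_i$, so any element of $A_d$ is a polynomial in the $x_i$; an element of $\bigcap_k (A_{x_k})_d$ that is \emph{not} in $A_d$ would have to be divisible, in every $A_{x_k}$, only by the other variables, and the finite-index condition on $\set{\deg x_i}{i \neq k}$ forces such an element back into $A$. I would phrase this as: if $f \in (A_{x_k})_d$ for all $k$, then writing $f = a/x_1^{e_1}\cdots x_r^{e_r}$ in lowest terms, relevance in each localization prevents any $x_k$ from appearing in the denominator, so no denominator survives and $f \in A_d$.

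Finally, for reflexivity I would invoke the structure already in place. Because $\strshf{X}(d) = \widetilde{A(d)}$ and $A$ is an integral noetherian domain, $\strshf{X}(d)$ is a coherent sheaf (Lemma \ref{lem:cohshvms}); on the simplicial-torus-embedding $X$ (which is normal, being covered by the normal affine torus charts $\Spec A_{(x_k)}$), I would show $\strshf{X}(d)$ agrees with its double dual by checking the isomorphism on the codimension-one complement and using the $S_2$ / normality property of $X$ to extend across the codimension-$\geq 2$ locus. Concretely, a reflexive sheaf is determined by its sections on the complement of a closed subset of codimension $\geq 2$, and the equalizer computation above shows precisely that sections of $\strshf{X}(d)$ extend uniquely from such an open set; this is exactly the characterization of reflexivity on a normal variety, so the same intersection argument that yields $\Gamma(X,\strshf{X}(d)) \cong A_d$ simultaneously establishes the reflexivity claim.
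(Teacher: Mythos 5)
The paper itself does not prove this theorem---it is quoted from \cite{vk:mhstvar}---so there is no in-paper argument to compare against; judged on its own, your proposal has a genuine gap at its very first step. You cover $X=\mProj A$ by the opens $D_+(x_k)$ and assert that the finite-index hypothesis ``guarantees that each $x_k$ is relevant.'' It does not. Relevance of $x_k$ (definition \ref{def:prdrlelt}) asks that the degrees of the homogeneous \emph{units of $A_{x_k}$} generate a finite-index subgroup of $D$, and inverting $x_k$ only manufactures units with degrees in $\Z\cdot\deg x_k$; the other generators are in general not invertible in $A_{x_k}$. For the Cox ring $k[x_1,x_2,y_1,y_2]$ of $\prsp{1}\times\prsp{1}$, graded by $\Z^2$ with $\deg x_i=(1,0)$ and $\deg y_j=(0,1)$, the hypothesis of the theorem holds, yet no single variable is relevant: the charts of $\mProj A$ are the $D_+(x_iy_j)$. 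What the hypothesis actually buys is that the deleted products $f_k=\prod_{i\neq k}x_i$ are relevant (all $x_i$ with $i\neq k$ become units in $A_{f_k}$), and the equalizer computation has to be run over those charts---together with a verification that the $D_+(f_k)$ really do cover $X$, i.e.\ that $D_+(f)\subseteq\bigcup_k D_+(f_k)$ for \emph{every} relevant $f$. As written, your equalizer is taken over sets that are not even open subschemes of $X$, and remark \ref{rem:prjempty} does not say what you attribute to it.

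Two further steps would still need repair after fixing the cover. First, your identification $\bigcap_k (A_{f_k})_d = A_d$ rests on writing a fraction ``in lowest terms,'' which presupposes unique factorization; $A$ is only assumed to be a finitely generated graded domain. Some structural input beyond formal manipulation is genuinely required here: for $A=k[t^2,t^3]$ with the $\Z$-grading (which satisfies every stated hypothesis), $\mProj A$ is a single point, every localization $(A_f)_1$ at a relevant $f$ is one-dimensional, while $A_1=0$; so the intersection of the $(A_f)_d$ need not equal $A_d$ without using normality or saturatedness of $A$ (which does hold for the semigroup algebras $\C[\dual{C}\cap\SF(\Delta)]$ to which the paper applies the theorem, but which your argument never invokes). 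Second, the reflexivity claim does not follow from the $H^0$ computation: reflexivity is the assertion that $\strshf{X}(d)\to\strshf{X}(d)^{\vee\vee}$ is an isomorphism, which must be checked on all opens, and your appeal to normality of $X$ presumes the charts $\Spec A_{(f)}$ are normal---again not established from the hypotheses. The usual route is to exhibit $\strshf{X}(d)$ as the pushforward of an invertible sheaf from an open subset whose complement has codimension at least two in a normal $X$; that is a separate argument, not a rerun of the global-section calculation.
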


\begin{theorem}
Assume the above hypothesis. If $M$ is a $D$-graded $A$-module then
$\Gamma(X, \widetilde{M(d)}) \cong M_d$ for each $d \in D$.
\end{theorem}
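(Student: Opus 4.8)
The plan is to reduce the statement for a general graded module $M$ to the
already-established case $M = A$ treated in Theorem \ref{the:glsecdAd}. The key
observation is the isomorphism $\widetilde{M(d)} \cong \widetilde{M}
\otimes_{\strshf{X}} \strshf{X}(d)$ together with the shift identity
$\Gamma(X, \widetilde{M(d)}) = \Gamma(X, \widetilde{M})(d)_0$ once we know how
global sections of $\widetilde{M}$ decompose; but more directly, since
$\Gamma(X, \widetilde{M(d)}) = \Gamma(X, \widetilde{M(d)})$ and $M(d)_0 = M_d$,
it suffices to prove the case $d = 0$, i.e.\ $\Gamma(X, \widetilde{M}) \cong
M_0$, and then apply it to the shifted module $M(d)$ (which is again a
$D$-graded $A$-module) to recover all degrees at once.

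\emph{First} I would set up the \v{C}ech-style computation using the affine
cover $\set{D_+(f)}{f \text{ relevant}}$ of $X = \mProj A$. By Lemma
\ref{lem:cohshvms}\ref{ite:MDpfMftd} we have $\restrict{\widetilde{M}}{D_+(f)}
\cong \widetilde{M_{(f)}}$, so $\Gamma(D_+(f), \widetilde{M}) = M_{(f)}$, the
degree-zero part of the localization $M_f$. \emph{Next} I would use the
generators $x_1, \dots, x_r$: the hypothesis that for each $k$ the degrees
$\set{\deg x_i}{i \neq k}$ generate a finite-index subgroup of $D$ guarantees
(as in Remark \ref{rem:prjempty}) that enough monomials in the $x_i$ are
relevant to cover $X$, so it is enough to compute the equalizer of the
restriction maps $\prod_f M_{(f)} \rightrightarrows \prod_{f,g} M_{(fg)}$ over
this cover, using Lemma \ref{lem:dpfdpgfg} to identify the overlaps $D_+(f)
\cap D_+(g) = D_+(fg)$. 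The sheaf axiom then identifies $\Gamma(X,
\widetilde{M})$ with the submodule of $\prod_f M_{(f)} \subset \prod_f M_f$ of
compatible sections inside the total localization.

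\emph{The main obstacle} I expect is the following: a global section of
$\widetilde{M}$ is, a priori, a degree-zero element of each $M_f$ that agrees
on overlaps, and one must show this common element actually lives in $M_0
\subset M$, i.e.\ that no denominators are needed. This is precisely where the
finite-index and integral-domain hypotheses do the work, exactly as in the
proof of Theorem \ref{the:glsecdAd} for $A$ itself. The argument should run
parallel: since $A$ is an integral domain, all the localizations $M_f$ embed in
a common module $M \otimes_A \operatorname{Frac}(A)$, so a compatible family is
a single element $s$ of this common object that is simultaneously integral at
each $f$; the covering condition coming from the generators then forces the
denominator to be a unit in degree zero, placing $s$ in $M_0$. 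The reverse
inclusion $M_0 \hookrightarrow \Gamma(X, \widetilde{M})$ is immediate since a
genuine degree-zero element of $M$ defines a compatible section on every
$D_+(f)$.

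\emph{Finally} I would verify that the isomorphism is natural in $d$ and
compatible with the $A$-module structure, so that applying the $d=0$ result to
each shift $M(d)$ yields $\Gamma(X, \widetilde{M(d)}) \cong M(d)_0 = M_d$ as
claimed. I would remark that the case $M = A$ recovers Theorem
\ref{the:glsecdAd}, confirming consistency, and that the only properties of $M$
used are that it is $D$-graded over the same ring $A$ whose generators satisfy
the stated covering hypothesis.
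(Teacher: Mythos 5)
You should first note that the paper does not actually prove this statement: it is one of three results quoted from \cite{vk:mhstvar}, so there is no in-paper proof to compare against, and whatever hypotheses on $M$ are imposed there are not visible here. Judged on its own terms, your argument has a genuine gap at exactly the step you flag as ``the main obstacle''. The reduction to $d=0$ via the shift $M(d)$ and the identification of $\Gamma(X,\widetilde{M})$ with the equalizer of $\prod_f M_{(f)}\rightrightarrows \prod_{f,g}M_{(fg)}$ are fine; but the claim that this equalizer equals $M_0$ is simply false for a general $D$-graded $A$-module, so no amount of care there can close the proof. Concretely, take $A=k[x_0,\dots,x_n]$ with the standard $\Z$-grading, so $X=\mProj A=\prsp{n}$. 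For $M=A/(x_0,\dots,x_n)$ every localization $M_f$ at a relevant $f$ vanishes, hence $\widetilde{M}=0$ and $\Gamma(X,\widetilde{M})=0$ while $M_0=k$: the natural map $M_0\to\Gamma(X,\widetilde{M})$ is not injective, contrary to your assertion that the ``reverse inclusion is immediate''. For $M=(x_0,\dots,x_n)\subset A$ one has $M_{(x_i)}=A_{(x_i)}$ for every $i$, hence $\widetilde{M}=\strshf{X}$ and $\Gamma(X,\widetilde{M})=k$ while $M_0=0$: the map is not surjective either, and the compatible family of sections equal to $1$ on each chart needs no denominators yet lies outside $M_0$.

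The two mechanisms you invoke also do not apply to general $M$. The embedding of all the $M_f$ into a common module $M\otimes_A\operatorname{Frac}(A)$ requires $M$ to be torsion-free, which is not assumed (for torsion $M$ that common module is zero); and ``the denominator is forced to be a unit'' is precisely the assertion that $M$ is saturated with respect to the irrelevant locus, which is an additional hypothesis rather than a consequence of the finite-index condition on $\set{\deg x_i}{i\neq k}$ --- that condition constrains $A$, not $M$. The honest conclusion is that the statement as printed needs extra hypotheses on $M$ (for instance that $M$ arises as $\Gamma_*$ of a sheaf, cf.\ proposition \ref{pro:tildefull}, or is otherwise saturated), and your sketch becomes a proof only after importing such a hypothesis and using it explicitly at the equalizer step; the case $M=A$ of theorem \ref{the:glsecdAd} is not a template that transfers to arbitrary graded modules.
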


\begin{theorem}
  \label{the:criOXdlb}
  Suppose $X = \mProj A$ is a multihomogeneous space defined for a
  $D$-graded integral domain $A = \bigoplus_{d \in D} A_d$ generated by
  homogeneous elements $x_1, \dotsc, x_r$ over $A_0$. 
  Moreover assume that $A_0$ is a field and $A^{\times} = A_0^{\times}$. Let
  $d \in D_f$, a sublattice of $D$
  generated by $ \set{\deg a}{a \text{ divides } f^N \text{ for some } N > 0}
  $, for every relevant element $f \in A$. Then
  $\strshf{X}(d)$ is a line bundle.
\end{theorem}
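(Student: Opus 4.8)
The plan is to verify the line-bundle property locally on the canonical affine cover $\set{D_{+}(f)}{f \text{ relevant}}$ of $X = \mProj A$ and then glue. Since $A_0$ is a field and $A$ is generated over $A_0$ by finitely many homogeneous elements, $A$ is a noetherian integral domain, so Lemma \ref{lem:cohshvms} applies: $\strshf{X}(d) = \cohshv{A(d)}$ is coherent, and for every relevant $f$ one has $\restrict{\strshf{X}(d)}{D_{+}(f)} \cong \cohshv{(A(d))_{(f)}}$ as sheaves on $\Spec A_{(f)}$. The first step is the bookkeeping identification $(A(d))_{(f)} = (A_f)_d$, the degree-$d$ component of the localization $A_f$ regarded as a module over $A_{(f)} = (A_f)_0$; this is immediate from the definition of the shift $A(d)$ and of the degree-zero subring.

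The crux is to produce a homogeneous unit of degree $d$ in $A_f$. Here I would first observe that $D_f$ is precisely the group of degrees of homogeneous invertible elements of $A_f$: if $a \divides f^N$, say $ab = f^N$ (with $b$ automatically homogeneous, as $A$ is a graded domain), then $a$ becomes invertible in $A_f$ with inverse $b/f^N$ and degree $\deg a$, so every generator of $D_f$ is realized by a homogeneous unit; conversely any homogeneous unit $g/f^k$ forces $g$ to divide a power of $f$, whence its degree lies in $D_f$. Consequently, writing $d = \sum_i n_i \deg a_i$ with each $a_i \divides f^{N_i}$ and $n_i \in \Z$, the element $u := \prod_i (a_i)^{n_i} \in A_f$ (negative exponents being legitimate since the $a_i$ are units in $A_f$) is a homogeneous unit of degree $d$.

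Multiplication by $u$ then yields the key isomorphism. Since $u \in \invelts{(A_f)}$ is homogeneous of degree $d$, the map $x \mapsto ux$ is an $A_{(f)}$-linear bijection $A_{(f)} = (A_f)_0 \xrightarrow{\ \sim\ } (A_f)_d$. Hence $(A(d))_{(f)} \cong A_{(f)}$ is free of rank one, so $\restrict{\strshf{X}(d)}{D_{+}(f)}$ is the trivial line bundle on $D_{+}(f)$. As this holds for every relevant $f$ by the hypothesis $d \in D_f$, and these $D_{+}(f)$ cover $X$, the coherent sheaf $\strshf{X}(d)$ is locally free of rank one, i.e.\ a line bundle.

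I expect the only real subtlety — the main obstacle — to be the clean identification of $D_f$ with the degrees of homogeneous units of $A_f$, and in particular the check that an arbitrary (not necessarily positive) integer combination of the generating degrees is again realized by a genuine unit; this is exactly where passing to the localization $A_f$, rather than working in $A$, is essential, and where one must carefully distinguish divisibility in $A$ from invertibility in $A_f$. Once this is in place the remaining steps are formal. I would also note that the standing hypotheses $A_0$ a field and $\invelts{A} = \invelts{(A_0)}$, which feed the earlier theorem \ref{the:glsecdAd}, enter the argument above only through the noetherian integral-domain structure of $A$; the local triviality itself needs nothing more than the existence of the homogeneous unit $u$.
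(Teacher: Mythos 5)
The paper itself gives no proof of this theorem --- it is quoted from \cite{vk:mhstvar} --- but your argument is correct and is the standard one: the hypothesis $d \in D_f$ yields a homogeneous unit $u \in A_f$ of degree $d$ (products of divisors of powers of $f$, with negative exponents allowed after localization), and multiplication by $u$ gives $(A(d))_{(f)} = (A_f)_d \cong (A_f)_0 = A_{(f)}$, trivializing $\strshf{X}(d)$ on each $D_+(f)$. The two delicate points --- that divisors of $f^N$ in a graded domain are automatically homogeneous, and that arbitrary integer combinations of the generating degrees of $D_f$ are realized by genuine units of $A_f$ --- are both handled correctly, so nothing is missing.
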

\begin{remark} \label{rem:lbdsubsp}
Assume the above hypothesis. Let $S$ be a collection of relevant elements such that $D_f =D$ for each $f \in S$. Then for each $d \in D$, the restriction of $\strshf{X}$-module $\strshf{X}(d)$ is a line bundle on $\bigcup_{f \in S} D_{+}(f)$.
\end{remark}
\begin{remark}
Let $X_{\Delta}$ be a toric variety with enough invariant effective Cartier divisors. We shall see later that for each cone $\sigma \in \Delta$, the corresponding homogeneous element $\chi(h_{\sigma})$ is relevant and satisfies $D_{\chi(h_{\sigma})} =\Pic(\Delta)$. So the $\strshf{\tProj A}$-modules $\strshf{\tProj A}(\alpha)$, $\alpha \in \Pic(\Delta)$, are line bundles on $\tProj A = \bigcup_{\sigma \in \Delta} D_{+}(\chi(h_{\sigma}))$ which recovers the result \cite[proposition 2.6(1)]{kajiwara:funtorvar}.
\end{remark}

\begin{definition}
Let $X = \mProj A$. For a quasi-coherent $\strshf{X}$-module $\mathcal{F}$, we define 
\begin{equation*}
  \Gamma_{*}(X, \mathcal{F}) := \bigoplus_{d \in D} \Gamma(X, \mathcal{F}(d))
\end{equation*}
where $\mathcal{F}(d) = \mathcal{F} \otimes_{\strshf{X}} \strshf{X}(d), d \in D$.
\end{definition}

\begin{remark}
Lemma \ref{the:glsecdAd} implies that $A \cong \Gamma_{*}(X, \strshf{X})$.
\end{remark}

$\Gamma_{*}(X,\underline{\ })$ is a covariant functor from quasi-coherent $\strshf{X}$-modules to $D$-graded $A$-modules.

\begin{proposition}\label{pro:tildefull}
Let $\mathcal{F}$ be a quasi-coherent $\strshf{X}$-module. Then the homomorphism $\mu: \widetilde{\Gamma_{*}(X, \mathcal{F})} \rightarrow \mathcal{F}$ is an isomorphism. In fact, every quasi-coherent $\strshf{X}$-module is of the form $\widetilde{F}$ for some $D$-graded $A$-module $F$. 
\end{proposition}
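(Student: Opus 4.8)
The plan is to imitate the proof of the corresponding statement for the classical $\Proj$ construction \cite[Proposition 5.15]{hartshorne:alggeom}. First I would pin down the map $\mu$: it is the counit of the adjunction between $\widetilde{(\ )}$ and $\Gamma_{*}(X,\underline{\ })$, i.e.\ the natural ``evaluation'' morphism that on an open set $U$ sends a section of $\widetilde{\Gamma_{*}(X,\mathcal{F})}$, locally of the form $m/a$ with $m \in \Gamma_{*}(X,\mathcal{F})$ and $a \in A$ homogeneous of the same degree and invertible on $U$, to the corresponding section $a^{-1}m$ of $\mathcal{F}$. Since being an isomorphism is a local property and $X = \mProj A = \bigcup_{f} D_+(f)$ with $f$ ranging over relevant elements, it suffices to prove that $\restrict{\mu}{D_+(f)}$ is an isomorphism for each relevant $f$. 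By Lemma \ref{lem:cohshvms}\ref{ite:MDpfMftd} we have $\restrict{\widetilde{\Gamma_{*}(X,\mathcal{F})}}{D_+(f)} \cong \widetilde{(\Gamma_{*}(X,\mathcal{F}))_{(f)}}$, while $\restrict{\mathcal{F}}{D_+(f)} \cong \widetilde{\Gamma(D_+(f),\mathcal{F})}$ because $\mathcal{F}$ is quasi-coherent and $D_+(f) \cong \Spec A_{(f)}$ is affine. Thus everything reduces to showing that the induced map of $A_{(f)}$-modules $(\Gamma_{*}(X,\mathcal{F}))_{(f)} \longrightarrow \Gamma(D_+(f),\mathcal{F})$ is an isomorphism.

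To describe this local map concretely, write $e = \deg f$ and note that under the standing hypotheses each twist $\strshf{X}(d)$ is a line bundle (Theorem \ref{the:criOXdlb} together with Remark \ref{rem:lbdsubsp}). Hence $f$ may be viewed as a global section of the invertible sheaf $\strshf{X}(e)$ whose non-vanishing locus $X_f$ is exactly $D_+(f)$, and on $D_+(f)$ the section $f^{k}$ trivialises $\strshf{X}(ke)$, giving an isomorphism $\restrict{\mathcal{F}}{D_+(f)} \xrightarrow{\ \sim\ } \restrict{\mathcal{F}(ke)}{D_+(f)}$. A degree-zero element of $\Gamma_{*}(X,\mathcal{F})_{f}$ has the form $t/f^{k}$ with $t \in \Gamma(X,\mathcal{F}(ke))$, and the map sends it to $f^{-k}\cdot \restrict{t}{D_+(f)} \in \Gamma(D_+(f),\mathcal{F})$. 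Keeping track of the $D$-grading shows that this is a well-defined $A_{(f)}$-linear map, independent of the chosen representative.

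The heart of the argument is the bijectivity of this map, which I would obtain from the multihomogeneous analogue of \cite[Lemma 5.14]{hartshorne:alggeom}, applied to the line bundle $\mathcal{L} = \strshf{X}(e)$ and its section $f$. For \emph{injectivity} one needs: if $t \in \Gamma(X,\mathcal{F}(ke))$ restricts to $0$ on $D_+(f) = X_f$, then $f^{N}t = 0$ for some $N$; this is the vanishing half of the lemma and uses only that $X$ is quasi-compact. For \emph{surjectivity} one needs: every $s \in \Gamma(D_+(f),\mathcal{F})$ satisfies $f^{N}s = \restrict{t}{D_+(f)}$ for some $N$ and some global $t \in \Gamma(X,\mathcal{F}(Ne))$; this is the extension half and requires a finite cover of $X$ by affine opens on which $\mathcal{L}$ is trivial, with quasi-compact pairwise intersections. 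Here the noetherian hypothesis (Lemma \ref{lem:noethcnd}) guarantees that $X$ is noetherian, so such a finite cover by opens $D_+(g)$ exists, and the intersections $D_+(g) \cap D_+(g') = D_+(gg')$ are affine, hence quasi-compact, by Lemma \ref{lem:dpfdpgfg}. I expect this extension step to be the main obstacle, since it is where the global geometry of $\mProj A$ and the line-bundle property of $\strshf{X}(e)$ must be used to clear denominators compatibly with the $D$-grading; everything else is formal. Once $\mu$ is shown to be an isomorphism, the final assertion is immediate: every quasi-coherent $\strshf{X}$-module $\mathcal{F}$ equals $\widetilde{F}$ for $F = \Gamma_{*}(X,\mathcal{F})$.
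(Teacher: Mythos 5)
Your proposal follows essentially the same route as the paper, whose entire proof is a one-line citation to Cox's Theorem 3.2 and Hartshorne's Proposition II.5.15 --- precisely the argument you spell out (identify $\mu$ as the counit, localize to the affine pieces $D_+(f)$, reduce to showing $(\Gamma_{*}(X,\mathcal{F}))_{(f)} \cong \Gamma(D_+(f),\mathcal{F})$, and prove that via the vanishing and extension halves of the analogue of Hartshorne's Lemma II.5.14). One small caution: $\strshf{X}(e)$ need not be invertible on all of $X$ (it is only reflexive in general, and Theorem \ref{the:criOXdlb} requires $e \in D_g$ for the relevant $g$), so in the surjectivity step you cannot assume $\strshf{X}(ke)$ is trivial on every covering open $D_+(g)$; rather one uses that $(A_g)_{ke}$ is free of rank one over $A_{(g)}$ exactly when $ke$ lies in the finite-index subgroup $D_g$, and clears denominators with such powers --- this is the extra care Cox's proof takes beyond Hartshorne's, and you correctly flag this extension step as the main obstacle.
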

\begin{proof}
  The proof is similar to the proof of  \cite[theorem 3.2]{cox:homcoordrng}
  which itself is a generalization of \cite[proposition
  II.5.15]{hartshorne:alggeom}.  
\end{proof}
\begin{remark}\label{rem:adjunc}
Recall that $\widetilde{(\ )}$ and $\Gamma_{*}(\tProj A,\underline{\ })$ are adjunctions over $\tProj A$. However, they are not adjunctions over $\mProj A$.  
\end{remark}
 \section{Comparison of the $\tProj$ and $\mProj$}

 \subsection{An open embedding}
\begin{lemma} \label{lem:wgcone}
  The weight cone $\omega$ (described in \ref{not:tprojgitfan}) is a  full
  dimensional pointed strickly convex rational polyhedral cone in $\Pic(\Delta)_{\R}$.
\end{lemma}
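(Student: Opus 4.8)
The plan is to realise $\omega$ concretely as the image $\deg(\dual C)$ of the cone $\dual C\subseteq\SF(\Delta)_\R$ under the surjection $\deg\colon\SF(\Delta)_\R\to\Pic(\Delta)_\R$ of \eqref{eq:splexseq}, and to check the three asserted properties in turn. Write $S:=\dual C\cap\SF(\Delta)$ for the grading monoid, so $A=\C[S]$ is the monoid algebra; being a submonoid of the lattice $\SF(\Delta)$, $S$ is cancellative and torsion free, so $A$ is an integral domain. Every homogeneous element of $A$ is a polynomial in homogeneous monoid generators $x_1=\chi(h_1),\dots,x_r=\chi(h_r)$, so its degree is a nonnegative combination of the $\deg x_i$; hence $\omega=\mathrm{cone}(\deg x_1,\dots,\deg x_r)=\deg(\dual C)$. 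By Gordan's lemma $S$ is finitely generated, so $\omega$ is generated by finitely many elements of $\Pic(\Delta)$ and is rational polyhedral.

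Full-dimensionality is immediate. Since $C$ is pointed (recorded just before Notation~\ref{not:tprojgitfan}), its dual $\dual C$ is full-dimensional in $\SF(\Delta)_\R$; as $\deg$ is surjective, $\mathrm{span}(\omega)=\deg(\mathrm{span}\,\dual C)=\Pic(\Delta)_\R$. Concretely, the support function $h_{\{0\}}\in\relint(\dual C)$ supplied by Assumption~\ref{ass:enufcartier} (for $\sigma=\{0\}$) is strictly positive on every ray and maps into the interior of $\omega$.

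For strict convexity I would run a units argument. Because $\dual C=\bigcap_\rho H_\rho$ is strongly convex — a support function vanishing on all $n_\rho$ is zero, the $n_\rho$ spanning $\NReal$ by Assumption~\ref{asm:fullfan} — the monoid $S$ has no nonzero invertible element, so $\invelts A=\invelts\C$. Now suppose $\omega$ contained a line; then some nonzero rational $\alpha$ has $\alpha,-\alpha\in\omega$, and clearing denominators yields nonzero homogeneous $f,g\in A$ with $\deg f=k\alpha$ and $\deg g=-k\alpha$ for some $k\ge1$. Then $0\ne fg\in A_0$, and provided $A_0=\C$ this forces $fg\in\invelts\C$, so $f$ is a unit of nonzero degree, contradicting $\invelts A=\invelts\C$. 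Thus $\omega$ is pointed as soon as $A_0=\C$.

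The crux is therefore the identity $A_0=\C$, and this is the step I expect to require the most care. Unwinding the grading, $A_0=\C[\set{m\in M}{\inpr m{n_\rho}\ge0\ \forall\rho}]$, which equals $\C$ exactly when the primitive generators positively span, $\mathrm{cone}(n_\rho)=\NReal$; equivalently $\relint(\dual C)\cap\Div(\MReal)=\emptyset$, i.e. $\dual C\cap\Div(\MReal)$ is a face of $\dual C$, which by the standard lineality computation for the projection $\deg$ is precisely the condition making $\deg(\dual C)$ pointed. This positive-spanning property (automatic when $\Delta$ is complete) is the essential geometric input; mere linear spanning of the support does not suffice, as the blow-up of $\affsp2$ at the origin already shows, where $A=\C[x_1,x_2,x_3]$ carries weights $1,-1,1$ and $\omega=\Pic(\Delta)_\R$. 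I would therefore derive positive spanning from the standing hypotheses on $\Delta$ and then conclude strict convexity exactly as above.
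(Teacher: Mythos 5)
Your write-up is far more detailed than the paper's, whose entire proof is the single sentence that the claim ``is clear from the fact that \eqref{eq:splexseq} is a split exact sequence and $\dual{C}$ surjects onto $\omega$.'' Your treatment of rationality and polyhedrality (Gordan's lemma applied to $\dual{C}\cap\SF(\Delta)$) and of full-dimensionality (surjectivity of $\deg$ together with full-dimensionality of $\dual{C}$) is correct and is, in substance, everything that the paper's one-liner actually delivers.

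The real issue is pointedness, and there your analysis is more honest than the paper's but does not close. You correctly reduce strict convexity of $\omega$ to $A_0=\C$, equivalently to the condition that the primitive generators $n_\rho$ positively span $\NReal$, and you correctly observe that this does \emph{not} follow from the standing hypotheses: your own example, the blow-up of $\mathbb{A}^2$ at the origin, has fan supported on the first quadrant (so Assumption \ref{asm:fullfan} holds), is smooth (hence simplicial, hence has enough invariant Cartier divisors), has $\Pic(\Delta)\cong\Z$ free, and yet $A=\C[x_1,x_2,x_3]$ with weights $(1,1,-1)$, so that $\omega=\Pic(\Delta)_\R$ is a line. Consequently the step you defer --- ``derive positive spanning from the standing hypotheses'' --- cannot be carried out: under Assumptions \ref{asm:fullfan}, \ref{asm:frpicgp} and \ref{ass:enufcartier} alone the pointedness assertion is false, and the paper's proof simply does not address it (split exactness and the surjection $\dual{C}\to\omega$ give the span, not the absence of a lineality space). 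The gap you have isolated is therefore a gap in the lemma itself rather than in your reasoning; the fix is to add the hypothesis $\mathrm{cone}(n_\rho : \rho\in\Delta(1))=\NReal$ (automatic for complete fans), under which your units argument $\invelts{A}=\invelts{\C}$, $A_0=\C$ finishes the proof exactly as you describe.
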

\begin{proof}
 It is clear from the fact that (\ref{eq:splexseq}) is split exact sequence and $\dual{C}$ surjects onto $\omega$.
\end{proof}

\begin{definition} [cf definition 1.3(2) in \cite{kajiwara:funtorvar}]\label{def:locSF}
For each $\sigma \in \Delta$, $\SF(\check{\sigma})$ denotes the group of integral support functions on $\Delta$ with support precisely  $ \check{\sigma} := \Delta(1) \setminus \sigma(1)$.
\end{definition}

\begin{lemma}[cf lemma 1.7(2) in \cite{kajiwara:funtorvar}] \label{lem:relcone}
Let $\sigma \in \Delta$. Then $\deg(\SF(\check{\sigma})) = \Pic(\Delta)$.
\end{lemma}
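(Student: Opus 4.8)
The plan is to exploit the split exact sequence (\ref{eq:splexseq}), whose surjectivity of $\deg$ guarantees that every class in $\Pic(\Delta)$ is the degree of \emph{some} support function, and then to correct such a lift by a principal support function so that the result vanishes on $\sigma(1)$ while its degree is unchanged. Since $\SF(\check{\sigma})$ is by construction a subgroup of $\SF(\Delta)$ (the support functions with $\supp{h} \subseteq \check{\sigma}$, i.e.\ vanishing on $\sigma(1)$), the inclusion $\deg(\SF(\check{\sigma})) \subseteq \Pic(\Delta)$ is automatic; the whole content of the lemma is the reverse inclusion.

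Concretely, I would fix $\alpha \in \Pic(\Delta)$ and, using surjectivity of $\deg$ in (\ref{eq:splexseq}), choose $h_0 \in \SF(\Delta)$ with $\deg(h_0) = \alpha$. Because $h_0$ is a $\Delta$-linear support function, its restriction to the cone $\sigma$ is linear and integral, hence cut out by a single lattice vector: there is $m_0 \in M$ with $h_0(u) = \inpr{m_0}{u}$ for all $u \in \sigma$, so in particular $h_0(n_\rho) = \inpr{m_0}{n_\rho}$ for every $\rho \in \sigma(1)$.

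Next I would set $h := h_0 - \Div(m_0)$. Since $\Div(m_0)$ is the globally linear support function $u \mapsto \inpr{m_0}{u}$, we get $h(n_\rho) = h_0(n_\rho) - \inpr{m_0}{n_\rho} = 0$ for all $\rho \in \sigma(1)$, so $\supp{h} \subseteq \check{\sigma}$ and hence $h \in \SF(\check{\sigma})$; moreover $h$ is integral, being a difference of integral support functions. By exactness of (\ref{eq:splexseq}) we have $\deg(\Div(m_0)) = 0$, whence $\deg(h) = \deg(h_0) = \alpha$. This exhibits $\alpha \in \deg(\SF(\check{\sigma}))$, giving the reverse inclusion and the claimed equality; it is precisely the analogue of Kajiwara's lemma 1.7(2).

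The only delicate point — the step I would check most carefully — is the claim that $h_0|_\sigma$ is given by an \emph{integral} $m_0 \in M$ rather than merely an element of $\MQ$. This is the Cartier condition, and it is built into the definition of $\SF(\Delta)$: under the identification $\SF(\Delta) \cong \TCDiv(\Delta)$ every $\Delta$-linear support function is the support function of a genuine $T$-invariant Cartier divisor, whose local datum on $\sigma$ lies in $M$. I would also fix the sign convention of this identification so that $\Div(m_0)$ really agrees with $h_0$ on $\sigma$ (replacing $m_0$ by $-m_0$ if necessary); this is pure bookkeeping and poses no genuine obstacle.
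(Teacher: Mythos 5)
Your proof is correct and complete, but it takes a more explicit route than the paper, whose entire proof is the assertion that (\ref{eq:splexseq}) induces a short exact sequence
\[
1 \longrightarrow M \cap \sigma^{\perp} \xrightarrow{\ \Div\ } \SF(\check{\sigma}) \xrightarrow{\ \deg\ } \Pic(\Delta) \longrightarrow 1
\]
followed by ``computing the rank'' (with the substance deferred to Kajiwara's lemma 1.7(2)). The real content of that exactness claim is exactly the surjectivity you establish: a rank count alone would only show that $\deg(\SF(\check{\sigma}))$ has finite index in $\Pic(\Delta)$, which does not by itself give equality even though $\Pic(\Delta)$ is free. Your argument --- lift $\alpha$ to $h_0 \in \SF(\Delta)$, extract the Cartier datum $m_0 \in M$ of $h_0$ on $\sigma$, and pass to $h = h_0 - \Div(m_0) \in \SF(\check{\sigma})$ with $\deg h = \deg h_0 = \alpha$ --- is the standard proof of that surjectivity (equivalently, of the triviality of $\Pic(U_{\sigma})$), and since a globally linear function $\Div(m)$ vanishes on $\sigma(1)$ exactly when $m \in M \cap \sigma^{\perp}$, it also identifies the kernel and hence subsumes the paper's sequence. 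Two minor points in your favour: (i) you read $\SF(\check{\sigma})$ as the \emph{group} of support functions with $\supp{h} \subseteq \check{\sigma}$, i.e.\ vanishing on $\sigma(1)$; definition \ref{def:locSF} literally says ``support precisely $\check{\sigma}$'', which is not closed under subtraction, but the paper's own exact sequence forces your reading, so it is the intended one; (ii) the ``delicate point'' you flag is handled correctly --- $h_0|_{\sigma}$ is an integral functional on the saturated sublattice $N \cap \mathrm{span}(\sigma)$, which is a direct summand of $N$, so the local datum does lift to an element of $M$, and the sign of the identification $\SF(\Delta) \cong \TCDiv(\Delta)$ is immaterial.
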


\begin{proof}
The proof follows from computing the rank of the following short exact sequence
\begin{equation*}
   1 \xrightarrow{} M \cap \sigma^{\perp}   \xrightarrow{\Div} \SF(\check{\sigma}) \xrightarrow{\deg} \Pic(\Delta) \rightarrow{} 1,  
\end{equation*}
induced from \ref{eq:splexseq}.
\end{proof}

\begin{proposition}
Let $X_{\Delta}$ be the toric variety associated to the fan $\Delta$ in $\NReal$. Then $\chi(h_{\sigma})$ is a relevant element in $A$, the algebra of support functions (defined in \S 2), with respect to $\Pic(\Delta)$-grading for all $\sigma \in \Delta.$
\end{proposition}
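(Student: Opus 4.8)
The plan is to unwind the definition of relevance (definition \ref{def:muhospcs}) into the language of support functions and then invoke lemma \ref{lem:relcone}. By definition, $f := \chi(h_{\sigma})$ is relevant precisely when the subgroup of $\Pic(\Delta)$ consisting of degrees of homogeneous invertible elements of the localized graded ring $A_f$ has finite index. Since $A = \C[\dual{C} \cap \SF(\Delta)]$ (notation \ref{not:tprojgitfan}) is an affine semigroup algebra, and in particular an integral domain, its homogeneous units (and those of the localization $A_f$, again a semigroup algebra) are exactly the scalar multiples of monomials $\chi(h)$. Thus the task reduces to computing the group of degrees $\deg(h)$ of those monomials $\chi(h)$ that become invertible after inverting $\chi(h_{\sigma})$, and showing this group has finite index in $\Pic(\Delta)$.

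First I would characterize the monomials that become units in $A_f$. A monomial $\chi(h)$ with $h \in \dual{C} \cap \SF(\Delta)$ is invertible in $A_f$ if and only if $\chi(h)$ divides some power $f^N = \chi(N h_{\sigma})$, i.e.\ $N h_{\sigma} - h \in \dual{C} \cap \SF(\Delta)$ for some $N > 0$. Writing $\dual{C} = \bigcap_{\rho \in \Delta(1)} H_\rho = \set{h}{h(n_\rho) \geq 0 \ \forall \rho \in \Delta(1)}$ and using that $h_{\sigma} \in \relint(\Sigma_{\sigma})$ satisfies $h_{\sigma}(n_\rho) = 0$ for $\rho \in \sigma(1)$ and $h_{\sigma}(n_\rho) > 0$ for $\rho \notin \sigma(1)$, the condition $(N h_{\sigma} - h)(n_\rho) \geq 0$ forces $h(n_\rho) = 0$ for every $\rho \in \sigma(1)$ (there $h_{\sigma}(n_\rho) = 0$ while $h(n_\rho) \geq 0$), and is automatically satisfiable for $\rho \notin \sigma(1)$ once $N$ is large. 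Hence, up to powers of $\chi(h_{\sigma})$, the invertible monomials are exactly those $\chi(h)$ with $h \in \Sigma_{\sigma} \cap \SF(\Delta)$.

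Next I would identify the group generated by these degrees with $\deg(\SF(\check{\sigma}))$. The group generated by $\Sigma_{\sigma} \cap \SF(\Delta)$ inside $\SF(\Delta)$ equals $\SF(\check{\sigma}) = \set{h \in \SF(\Delta)}{h(n_\rho) = 0 \ \forall \rho \in \sigma(1)}$ (definition \ref{def:locSF}): any such $h$ can be written as the difference $(h + N h_{\sigma}) - N h_{\sigma}$, and for $N$ large both summands lie in $\Sigma_{\sigma} \cap \SF(\Delta)$ because $h_{\sigma}(n_\rho) > 0$ off $\sigma(1)$. Since $\deg(h_{\sigma})$ itself lies in $\deg(\SF(\check{\sigma}))$ (as $h_{\sigma} \in \SF(\check{\sigma})$), the group of degrees of homogeneous units of $A_f$ is precisely $\deg(\SF(\check{\sigma}))$.

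Finally, lemma \ref{lem:relcone} gives $\deg(\SF(\check{\sigma})) = \Pic(\Delta)$, so this group is all of $\Pic(\Delta)$ and in particular of finite index. Therefore $A_f$ is periodic and $\chi(h_{\sigma})$ is relevant; this simultaneously yields $D_{\chi(h_{\sigma})} = \Pic(\Delta)$ in the sense of theorem \ref{the:criOXdlb}. I expect the main obstacle to be the second step, namely justifying rigorously that the homogeneous units of the localization are monomials and pinning down the cone-theoretic characterization $N h_{\sigma} - h \in \dual{C}$; the passage from the cone $\Sigma_{\sigma}$ to the full group $\SF(\check{\sigma})$ and the final appeal to lemma \ref{lem:relcone} are then routine.
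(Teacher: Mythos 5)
Your proposal is correct and follows the same route as the paper: identify the subgroup of $\Pic(\Delta)$ generated by degrees of homogeneous units of $A_{\chi(h_{\sigma})}$ with $\deg(\SF(\check{\sigma}))$ and conclude by lemma \ref{lem:relcone}. The paper simply asserts this identification in one line, whereas you supply the supporting details (homogeneous units of the localized semigroup algebra are scalar multiples of monomials, the divisibility condition $Nh_{\sigma}-h\in \dual{C}\cap\SF(\Delta)$, and the passage from the cone $\Sigma_{\sigma}$ to the group $\SF(\check{\sigma})$), all of which check out.
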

\begin{proof}
For the $\Pic(\Delta)$-graded $\C$-algebra $A_{\chi(h_{\sigma})}$, the subgroup generated by degrees of units  $D' = \deg(\SF(\check{\sigma})) = \Pic(\Delta)$ by lemma \ref{lem:relcone}. Therefore $\chi(h_{\sigma})$ is a relevant element in $A$ by definition \ref{def:prdrlelt} for all cones $\sigma \in \Delta$.
\end{proof}
 
 \begin{theorem} \label{the:embdd}
   Let $X_{\Delta}$ be the toric variety with enough invariant Cartier divisors
   associated with the fan $\Delta$ and $\Spec(\C[M])$ its torus. Then there is a $\Spec(\C[M])$ equivariant open embedding
   $\mu: \tProj A \hookrightarrow \mProj A$, where $A$ is the algebra of
   support functions on $\Delta$ (defined in \ref{not:tprojgitfan}).
 \end{theorem}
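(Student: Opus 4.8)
The plan is to build $\mu$ by gluing, exploiting that $\tProj A$ and $\mProj A$ are assembled from the \emph{same} affine charts. Perling's construction (diagram \eqref{fig:comdig}) covers $\tProj A$ by opens $D_{+}(\chi(h_\sigma)) \cong \Spec A_{(\chi(h_\sigma))} \cong \Spec\C[\sigma_M] = U_\sigma$, one for each $\sigma\in\Delta$, glued along face inclusions so as to recover $X_\Delta$. By the proposition preceding this theorem each $\chi(h_\sigma)$ is relevant, so the identical rings $A_{(\chi(h_\sigma))}$ reappear as basic affine open subschemes $D_{+}(\chi(h_\sigma))\subset\mProj A$. On charts there is thus a tautological identification, and the theorem amounts to checking that the two gluings agree.

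First I would record two facts. The character map is multiplicative, $\chi(h)\chi(h')=\chi(h+h')$, and $D_{+}(\chi(h))$ depends only on $\supp{h}$: using the relation $\langle h, l_\rho\rangle = c_\rho\, h(n_\rho)$ with $c_\rho>0$, one sees that $h\in\relint\Sigma_\gamma \iff \supp{h}=\check\gamma$, and if $h,h'\in\relint\Sigma_\gamma$ then $Nh-h',\, N'h'-h\in\dual{C}\cap\SF(\Delta)$ for $N,N'\gg 0$, so $\chi(h)\mid\chi(h')^{N}$ and $\chi(h')\mid\chi(h)^{N'}$, forcing $D_{+}(\chi(h))=D_{+}(\chi(h'))$. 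Next, for $\sigma,\tau\in\Delta$ with common face $\gamma:=\sigma\cap\tau\in\Delta$ one has $(\sigma\cap\tau)(1)=\sigma(1)\cap\tau(1)$, whence $\supp{h_\sigma+h_\tau}=\check\sigma\cup\check\tau=\check\gamma=\supp{h_\gamma}$. Combining with Lemma \ref{lem:dpfdpgfg} yields, on the $\mProj$ side,
\begin{align*}
  D_{+}(\chi(h_\sigma))\cap D_{+}(\chi(h_\tau))
    &= D_{+}(\chi(h_\sigma)\chi(h_\tau)) = D_{+}(\chi(h_\sigma+h_\tau))\\
    &= D_{+}(\chi(h_\gamma)) = U_\gamma,
\end{align*}
which is exactly the overlap $U_\sigma\cap U_\tau=U_\gamma$ occurring under Perling's isomorphism $\tProj A\cong X_\Delta$, carrying the same toric transition immersions $U_\gamma\hookrightarrow U_\sigma$, $U_\gamma\hookrightarrow U_\tau$ induced by $\dual\sigma,\dual\tau\subseteq\dual\gamma$.

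With charts and overlaps matched, I would apply the gluing lemma for schemes — equivalently the universal property of the ringed-space colimit defining $\mProj A$ — to the compatible family of identity maps $U_\sigma\xrightarrow{=}D_{+}(\chi(h_\sigma))\subset\mProj A$, obtaining a morphism $\mu\colon\tProj A\to\mProj A$. Since $\mu$ restricts to an isomorphism onto an open subscheme on each member of an open cover, and its image is the open union $\bigcup_{\sigma\in\Delta}D_{+}(\chi(h_\sigma))$, it is an open embedding onto that union. Each chart $\Spec\C[\sigma_M]$ carries the standard $T=\Spec\C[M]$-action in both constructions and the transition maps are $T$-equivariant, so $\mu$ is equivariant.

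The crux is the middle step: one must verify that Perling's overlap, computed inside $X_{\hat{\Delta}}$ with the degree-zero structure sheaf, literally coincides — as a scheme and with identical transition isomorphisms — with the Brenner–Schr\"oer overlap $D_{+}(\chi(h_\sigma)\chi(h_\tau))$ produced by Lemma \ref{lem:dpfdpgfg}. This rests entirely on the support-only dependence of $D_{+}(\chi(h))$ together with the identity $\supp{h_\sigma+h_\tau}=\check{(\sigma\cap\tau)}$; once these are secured, the two gluing recipes are forced to agree and endow each overlap with the same scheme structure.
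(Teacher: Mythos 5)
Your proposal is correct and follows essentially the same route as the paper: both identify the charts $\Spec A_{(\chi(h_\sigma))}$, $\sigma\in\Delta$, as common affine opens of $\tProj A$ and $\mProj A$ (using that each $\chi(h_\sigma)$ is relevant) and glue the tautological identifications into an open morphism $\mu$. The only difference is that you explicitly verify, via Lemma \ref{lem:dpfdpgfg} and the computation $\supp{h_\sigma+h_\tau}=\check{(\sigma\cap\tau)}$, that the overlaps $D_{+}(\chi(h_\sigma))\cap D_{+}(\chi(h_\tau))$ in $\mProj A$ match the overlaps $U_{\sigma\cap\tau}$ in $X_\Delta$ for arbitrary pairs of cones, a compatibility the paper's proof only records for face inclusions $\tau\faceof\sigma$ via diagram \ref{equ:opmapdiag} and otherwise leaves implicit.
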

 
 \begin{proof}
   Recall that $\mProj A := \bigcup_{f \colon f \text{ is relevant in } A}
   \Spec A_{(f)}$ and $\chi(h_{\sigma})$ is relevant for all $\sigma \in
   \Delta$.
   For each $\sigma \in \Delta$, we have
   $U_{\sigma} \cong \Spec k[\sigma_M] \cong \Spec (A_{(\chi(h_{\sigma}))})$
   by \cite[lemma 3.10]{perling:toricvarhomprimespec}
   and for $\tau \faceof \sigma$, we have the following commutative diagram:
 \begin{equation} \label{equ:opmapdiag}
     \begin{tikzcd}
       \tProj(A_{(\chi(h_{\tau}))}) \arrow[r, "\cong"] \arrow[d,hook] &
       \Spec(k[\tau_M]) \arrow[r,"\cong"]\arrow[d,hook] &
       \Spec(A_{(\chi(h_{\tau}))}) \arrow[d,hook]\\
       \tProj(A_{(\chi(h_{\sigma}))}) \arrow[r, "\cong"] &
       \Spec(k[\sigma_M]) \arrow[r,"\cong"] & \Spec(A_{(\chi(h_{\sigma}))}).
     \end{tikzcd}
 \end{equation}
 It follows from the above diagram that $X_{\Delta}$ is isomorphic to $\tProj A$. Moreover, the morphism $\mu$, which is the composition of the following morphisms, 
 \begin{equation}
     \tProj A \cong \bigcup_{\sigma \in \Delta}
     \tProj(A_{(\chi(h_{\sigma}))}) \cong \bigcup_{\sigma \in \Delta} \Spec(A_{(\chi(h_{\sigma}))}) \hookrightarrow \mProj A
 \end{equation}
 is an open morphism. Note that the morphisms in \ref{equ:opmapdiag} are $\Spec(\C[M])$ equivariant. This makes $\mu$ a $\Spec(\C[M])$ equivariant open morphism.
 \end{proof}
  
  \subsection{Criterion for isomorphism}
  In this subsection, we give a criterion for the open embedding to be an
  isomorphism.
  
  Let $X_{\Delta}$ be a simplicial toric variety associated to the fan $\Delta$ in
  $\NReal$ satisfying assumptions \ref{asm:fullfan} and \ref{asm:frpicgp},
  i.e.\ the fan does not lie in a lower dimensional subspace and the Picard
  group $\Pic(\Delta)$ of $X$ is free.
  Then we have the short exact sequence \ref{eq:splexseq}. 
 
 Keeping in mind remark \ref{rem:relconecorr}, we define the following:
  \begin{definition}\label{def:simcomfan}
   Let $\Delta$ be a simplicial fan in $N_{\R}$ and $\Delta(1)$ be the set of rays.
   \begin{enumerate}
       \item A simplicial cone in $\Delta$ is a cone $\tau \subset N_{\R}$ generated by $S$, a linearly independent subset of $\Delta(1)$. 
       \item $\Delta$ is said to be simplicially complete if it contains every simplicial cone in $\Delta$.
   \end{enumerate}
  \end{definition}
  \begin{example}
   The fans of projective and weighted projective spaces are simplicially complete but the fans of Hirzebruch surfaces $H_r, r\geq 1$ are not.
  \end{example}
  
  Let $\Delta$ be a simplicial fan in $\NReal$. For each ray $\rho \in \Delta(1)$, we define an integral support function which takes
  the following values on the rays
  \begin{equation}
        K_{\rho}: \supp{\Delta} \rightarrow \R, \qquad
  K_{\rho}(n_{\rho'}) =  \begin{cases}
   \lambda_{\rho}, & \mbox{ if } \rho' = \rho\\
  0, & \mbox{ if } \rho' \not= \rho.
  \end{cases}
  \end{equation}

  We choose $\lambda_{\rho} \in \Z_{\geq 0}$ in a way such that $K_{\rho}$ are primitive element in the lattice $\SF(\Delta)$. Furthermore, $K_{\rho}$ are linearly independent and are contained in $\dual{C} \cap \SF(\Delta)$. For a fan $\Delta$ (not necessarily simplicial) in $\NReal$ we define the map $\Phi: \SF(\Delta)_{\R} \rightarrow \R^{\Delta(1)}$ by sending a support function $ h $ to $ (h(n_{\rho}))_{\rho \in \Delta(1)}$. Then $\Phi$ is an injective linear map and the following diagram commutes
  
  \begin{equation}
      \begin{tikzcd}
        \SF(\Delta) \arrow[r,"\Phi"] \arrow[d] & \Z^{\Delta(1)} \arrow[d] \\
        \SF(\Delta)_{\R} \arrow[r,"\Phi"] & \R^{\Delta(1)} 
      \end{tikzcd}
  \end{equation}
    $\Phi$ has finite cokernel whenever $\Delta$ is simplicial.
  \begin{lemma}\label{lem:polalg}
  Let $X_{\Delta}$ be a toric variety corresponding to the fan $\Delta$ in $\NReal$ satisfying \ref{asm:fullfan} and $A$ be the coordinate ring of the affine toric variety $X_{C}$ (defined in \ref{not:tprojgitfan}). Then $\Delta$ is simplicial if and only if $A$ is a polynomial $\C$-algebra of Krull dimension $\dim(\SF(\Delta)_{\R})$. 
  \end{lemma}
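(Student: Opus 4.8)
The plan is to work directly with the semigroup $S := \dual{C} \cap \SF(\Delta)$, since $A = \C[S]$ is a polynomial $\C$-algebra exactly when $S$ is a free commutative monoid $\cong \N^{k}$, in which case its Krull dimension equals the rank of $S$. The bookkeeping is controlled by the injective linear map $\Phi \colon \SF(\Delta)_{\R} \to \R^{\Delta(1)}$, $h \mapsto (h(n_{\rho}))_{\rho}$, introduced above the lemma: under $\Phi$ the cone $\dual{C} = \bigcap_{\rho} H_{\rho}$ is carried into $\R_{\geq 0}^{\Delta(1)}$, so that $S$ is identified with $\Phi(\SF(\Delta)) \cap \Z_{\geq 0}^{\Delta(1)}$. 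Thus both directions reduce to understanding the lattice $\Phi(\SF(\Delta)) \subseteq \Z^{\Delta(1)}$ relative to the coordinate orthant.

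For the forward implication I would assume $\Delta$ simplicial and argue as follows. Then $\Phi$ has finite cokernel, hence is an isomorphism of real vector spaces, giving $\dim \SF(\Delta)_{\R} = \sizeof{\Delta(1)}$. Simpliciality is precisely what makes each $K_{\rho}$ a well-defined $\Delta$-linear support function: on every $\sigma \in \Delta$ the generators $\setl{n_{\rho'} : \rho' \in \sigma(1)}$ are linearly independent, so prescribing $K_{\rho}(n_{\rho'}) = \lambda_{\rho}\delta_{\rho\rho'}$ extends uniquely and linearly to $\sigma$. Each $K_{\rho}$ is nonnegative on all rays, hence lies in $\dual{C}$, and is integral by the choice of $\lambda_{\rho}$, hence lies in $S$; since $\Phi(K_{\rho}) = \lambda_{\rho} e_{\rho}$, the $K_{\rho}$ are linearly independent and number $\sizeof{\Delta(1)} = \dim\SF(\Delta)_{\R}$. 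The content is then to promote this to the monoid identity $S = \bigoplus_{\rho} \N K_{\rho}$, after which $S \cong \N^{\Delta(1)}$ and $A \cong \C[x_{\rho} : \rho \in \Delta(1)]$ is polynomial of Krull dimension $\sizeof{\Delta(1)} = \dim\SF(\Delta)_{\R}$.

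For the reverse implication, suppose $A$ is polynomial of Krull dimension $\dim\SF(\Delta)_{\R}$. Then $S$ is free of that rank, so $\dual{C}$ is a smooth, in particular simplicial, full-dimensional cone in $\SF(\Delta)_{\R}$, and $X_{C} = \Spec A$ is an affine space; consequently the open subscheme $X_{\hat{\Delta}} \subseteq X_{C}$ is smooth. Since the facets of $\dual{C} = \bigcap_{\rho} H_{\rho}$ are the hyperplanes $\partial H_{\rho}$ (one genuine facet per ray, using Assumption \ref{asm:fullfan} and the enough–Cartier setup of \ref{not:tprojgitfan}), simpliciality of $\dual{C}$ forces each $\hat{\sigma} \in \hat{\Delta}$ to be a face of a smooth cone, hence itself smooth and simplicial. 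By Remark \ref{rem:simcon}, $\hat{\sigma}$ is simplicial if and only if the corresponding $\sigma \in \Delta$ is simplicial, so $\Delta$ is simplicial.

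The main obstacle is the generation step in the forward direction, namely showing $\SF(\Delta) = \bigoplus_{\rho} \Z K_{\rho}$, equivalently that the primitive edge generators $\lambda_{\rho} e_{\rho}$ of the image cone already generate $\Phi(\SF(\Delta))$ over $\Z$. I expect this to be the delicate point because the integrality built into the definition of $\Delta$-linear support functions could a priori impose constraints on $(h(n_{\rho}))_{\rho}$ that \emph{mix} several rays at once, in which case $\Phi(\SF(\Delta))$ would strictly contain the coordinate sublattice $\bigoplus_{\rho} \lambda_{\rho}\Z$ and $S$ would fail to be free. The heart of the argument is therefore to prove that, for simplicial $\Delta$, these constraints decouple ray by ray into the divisibilities $\lambda_{\rho} \divides h(n_{\rho})$; I would isolate this as a separate lemma, verify the decoupling cone-by-cone using linear independence of the ray generators on each simplicial $\sigma$, and then combine it with the finiteness of $\coker\Phi$ to conclude that the $\lambda_{\rho} e_{\rho}$ form a $\Z$-basis of $\Phi(\SF(\Delta))$. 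This is also the step where the interplay with Remark \ref{rem:relconecorr} should be checked most carefully.
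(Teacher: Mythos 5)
Your overall route matches the paper's: both directions are reduced to the structure of the monoid $S=\dual{C}\cap\SF(\Delta)$ via the map $\Phi$, and your reverse implication (freeness of $S$ forces $\dual{C}$, hence $C$ and each $\hat{\sigma}$, to be simplicial, then Remark \ref{rem:simcon}) is the paper's argument with a little extra detail. The problem is the forward direction. You correctly isolate the crux --- that $\Phi(\SF(\Delta))\cap\Z_{\geq 0}^{\Delta(1)}$ should be freely generated by the $\Phi(K_{\rho})=\lambda_{\rho}e_{\rho}$, equivalently that $\lambda_{\rho}\divides h(n_{\rho})$ for every $h\in\SF(\Delta)$ --- but you leave it as a plan rather than a proof, and the plan (``verify the decoupling cone-by-cone using linear independence of the ray generators'') cannot succeed. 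Linear independence of $\set{n_{\rho}}{\rho\in\sigma(1)}$ only lets you prescribe the \emph{real} values $h(n_{\rho})$ independently; integrality of $h$ must hold on all of $N\cap\sigma$, which for a simplicial non-smooth cone is strictly larger than the sublattice spanned by the $n_{\rho}$, and that is exactly where the constraints mix rays.

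Concretely, take $N=\Z^{2}$ and let $\Delta$ consist of $\sigma=\mathrm{Cone}\bigl((1,0),(1,2)\bigr)$ and its faces; all standing assumptions hold, $\SF(\Delta)=M$ and $\Pic(\Delta)=0$. Then $\Phi(\SF(\Delta))=\set{(a,b)\in\Z^{2}}{a\equiv b \bmod 2}$, one computes $\lambda_{\rho_{1}}=\lambda_{\rho_{2}}=2$, the support function $h$ with $\Phi(h)=(1,1)$ lies in $S$ but not in $\N K_{\rho_{1}}+\N K_{\rho_{2}}$, and $A\cong\C[x,y,z]/(xz-y^{2})$ is not a polynomial ring although $\Delta$ is simplicial. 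So the divisibility you flagged as delicate genuinely fails, and with it the forward implication as stated; be aware that the paper's own proof asserts $h(n_{\rho})=a_{\rho}\lambda_{\rho}$ at precisely this point without justification, so you have put your finger on a real weak spot rather than merely a hard step. The decoupling (and the forward direction) does hold if ``simplicial'' is strengthened to ``smooth,'' i.e.\ each cone is generated by part of a $\Z$-basis of $N$: then $\Phi$ maps $\SF(\Delta)$ onto $\Z^{\Delta(1)}$, all $\lambda_{\rho}=1$, and $S\cong\N^{\Delta(1)}$. To rescue the simplicial case one would need either such an extra hypothesis or a different choice of $A$ (e.g.\ the Cox ring graded by the class group rather than the ring of support functions graded by $\Pic(\Delta)$).
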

  \begin{proof}
  Assume $\Delta$ is simplicial.  Note that $\{K_{\rho}\}_{\rho \in \Delta(1)}$ forms a basis of $\SF(\Delta)$. If $h \in \SF(\Delta)$  and $\Phi(h) = (c_{\rho})_{\rho \in \Delta(1)} \in \Phi(\SF(\Delta)) \cap (\Z_{\geq 0})^{\Delta(1)}$, then  $c_{\rho} = h(n_{\rho}) = a_{\rho}\lambda_{\rho}$ implying $h = \sum_{\rho \in \Delta(1)} a_{\rho}K_{\rho}$. Therefore $\langle K_{\rho} : \rho \in \Delta(1) \rangle = \dual{C} \cap \SF(\Delta) = \Phi(\SF(\Delta)) \cap (\Z_{\geq 0})^{\Delta(1)}$ which implies $A = \C[\dual{C} \cap \SF(\Delta)]$ is a polynomial $\C$-algebra. It is clear that Krull dimension of $A$ is $\dim(\SF(\Delta)_{\R})$.  \\
    
    On the other hand, assume that $A$ is a polynomial $\C$-algebra of Krull dimension $\dim(\SF(\Delta)_{\R})$. It is then clear that there exists a basis for the semigroup $\dual{C} \cap \SF(\Delta)$ which makes $\dual{C}$ a simplicial cone. Therefore the cone $C$ and  its faces $\hat{\sigma}, \text{ where } \sigma \in \Delta$, are simplicial. Finally remark \ref{rem:simcon} implies the cones $ \sigma \in \Delta$ are simplicial.
  \end{proof}
  
  \begin{remark} \label{rem:algpresn}
  When $\Delta$ is simplicial the $\C$-algebra $A = \C[\chi(K_{\rho}) : \rho \in \Delta(1)]$. For each $\sigma \in \Delta$ we can take $h_{\sigma} = \sum_{\rho \in \sigma(1)}K_{\rho}$ and therefore $\chi(h_{\sigma}) = \prod_{\rho \in \sigma(1)}\chi(K_{\rho})$.
  \end{remark}
  
 \begin{theorem} \label{the:isom}
 Let $X_{\Delta}$ be a simplicial toric variety corresponding to the fan $\Delta$ in $\NReal$ satisfying \ref{asm:fullfan} and $A$ be the coordinate ring of the affine toric variety $X_{C}$ (defined in \ref{not:tprojgitfan}). Then $\Delta$ is simplicially complete if and only if  the morphism $\mu: \tProj A \rightarrow \mProj A$ in \ref{the:embdd} is an isomorphism.
 \end{theorem}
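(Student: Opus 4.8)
The plan is to compare the affine charts covering the two spaces. Both are torus embeddings of $\Spec\C[M]$ and, by Theorem \ref{the:embdd}, $\mu$ is an equivariant open embedding; hence $\mu$ is an isomorphism if and only if it is surjective, and the whole problem reduces to bookkeeping which charts of $\mProj A$ meet the image of $\mu$. By Lemma \ref{lem:polalg} and Remark \ref{rem:algpresn}, $A=\C[\set{x_\rho}{\rho\in\Delta(1)}]$ is a polynomial algebra, where I write $x_\rho:=\chi(K_\rho)$, graded via $\deg\colon\SF(\Delta)=\bigoplus_{\rho}\Z K_\rho\to\Pic(\Delta)$ with kernel $M$ (the sequence \ref{eq:splexseq}). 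The first step is the relevance criterion: for $T\subseteq\Delta(1)$, the monomial $\prod_{\rho\in T}x_\rho$ is relevant if and only if $\set{n_\rho}{\rho\notin T}$ is linearly independent in $\NReal$. Indeed, the homogeneous units of the localization have degrees generating $\deg(\bigoplus_{\rho\in T}\Z K_\rho)$, which is of finite index in $\Pic(\Delta)$ precisely when the image of $M$ in $\SF(\Delta)/\bigoplus_{\rho\in T}\Z K_\rho\cong\bigoplus_{\rho\notin T}\Z K_\rho$ is of finite index; unwinding $\Div$ via $m\mapsto(\langle m,n_\rho\rangle)_\rho$, this is the condition that $M\to\Z^{\Delta(1)\setminus T}$ (dual to $e_\rho\mapsto n_\rho$) has finite cokernel, i.e.\ that $\set{n_\rho}{\rho\notin T}$ is linearly independent. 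This refines the correspondence of Remark \ref{rem:relconecorr} and uses the same exact-sequence bookkeeping as Lemma \ref{lem:relcone}.

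The second step identifies the charts. For a linearly independent $S\subseteq\Delta(1)$ put $\sigma_S:=\langle\set{n_\rho}{\rho\in S}\rangle$. A direct computation of degree-zero Laurent monomials in $A_{\prod_{\rho\notin S}x_\rho}$ shows $A_{(\prod_{\rho\notin S}x_\rho)}\cong\C[\dual{\sigma_S}\cap M]$ (a degree-zero monomial corresponds to $m\in M$ with $\langle m,n_\rho\rangle\ge0$ for $\rho\in S$, i.e.\ $m\in\dual{\sigma_S}$), so that $D_+(\prod_{\rho\notin S}x_\rho)\cong U_{\sigma_S}$; this generalizes \cite[lemma 3.10]{perling:toricvarhomprimespec}. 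By \cite[proposition 3.4]{breshr:ampfamilies} together with Remark \ref{rem:relconecorr} and the criterion above, $\mProj A$ is covered by exactly these charts, and its indexing set $\overline{\Delta}=\set{\sigma_S}{S\subseteq\Delta(1)\text{ linearly independent}}$ is precisely the collection of all simplicial cones in $\Delta$ (Definition \ref{def:simcomfan}(1)). On the other hand, since $\supp{h_\sigma}=\Delta(1)\setminus\sigma(1)$, the variables occurring in $\chi(h_\sigma)$ are exactly $\set{x_\rho}{\rho\notin\sigma(1)}$, so $D_+(\chi(h_\sigma))=D_+(\prod_{\rho\notin\sigma(1)}x_\rho)=U_{\sigma}$. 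Thus $\mu$ presents $\tProj A\cong\bigcup_{\sigma\in\Delta}U_\sigma$ as the open subscheme of $\mProj A=\bigcup_{\sigma_S\in\overline{\Delta}}U_{\sigma_S}$ indexed by the inclusion $\Delta\subseteq\overline{\Delta}$.

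The final step detects surjectivity chart by chart using distinguished points. Fix a simplicial cone $\sigma_S$ and its distinguished point $x_{\sigma_S}\in U_{\sigma_S}$. For $\sigma\in\Delta$, Lemma \ref{lem:dpfdpgfg} gives $U_{\sigma_S}\cap U_\sigma=D_+(\prod_{\rho\notin S\cap\sigma(1)}x_\rho)=U_{\sigma_{S\cap\sigma(1)}}$, the element being relevant because $S\cap\sigma(1)\subseteq S$ is linearly independent; inside the affine toric variety $U_{\sigma_S}$ this is the standard open of the face $\sigma_{S\cap\sigma(1)}\faceof\sigma_S$. Since the distinguished point of $U_{\sigma_S}$ lies in the open chart of a face $\tau\faceof\sigma_S$ only for $\tau=\sigma_S$, we get $x_{\sigma_S}\in U_\sigma$ if and only if $S\cap\sigma(1)=S$, i.e.\ $S\subseteq\sigma(1)$. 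Hence $x_{\sigma_S}\in\im(\mu)$ iff some $\sigma\in\Delta$ contains all rays of $S$; as $\Delta$ is simplicial and closed under faces, this is equivalent to $\sigma_S\in\Delta$. Therefore $\mu$ is surjective — equivalently an isomorphism — if and only if every simplicial cone $\sigma_S$ lies in $\Delta$, which is exactly the condition that $\Delta$ be simplicially complete.

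I expect the main obstacle to be the ``only if'' direction, namely producing, from a simplicial cone missing in $\Delta$, a point of $\mProj A$ outside $\im(\mu)$. The difficulty is that $\overline{\Delta}$ need not be a fan and $\mProj A$ may fail to be separated (Remark \ref{rem:relconecorr}), so one cannot appeal to a global orbit–cone dictionary. The distinguished-point test circumvents this by localizing the entire question inside the single, well-behaved affine chart $U_{\sigma_S}$, invoking only the intersection formula of Lemma \ref{lem:dpfdpgfg} and the relevance criterion of the first step.
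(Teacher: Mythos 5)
Your proof is correct and follows essentially the same route as the paper: both directions reduce to matching the charts $D_+(\prod_{\rho\notin S}\chi(K_\rho))$ of $\mProj A$ against the cones of $\Delta$ via Remark \ref{rem:relconecorr}, and your distinguished point $x_{\sigma_S}$ is exactly the paper's homogeneous prime $\mathscr{P}=(\chi(K_\rho):\rho\in\sigma(1))$, with your face-of-a-chart test being the same computation as the paper's observation that every $\chi(h_\tau)$, $\tau\in\Delta$, lies in $\mathscr{P}$. The only cosmetic difference is that you verify the relevance criterion and the identification $A_{(\prod_{\rho\notin S}\chi(K_\rho))}\cong\C[\dual{\sigma_S}\cap M]$ explicitly where the paper cites Remark \ref{rem:relconecorr} and \cite[lemma 3.10]{perling:toricvarhomprimespec}.
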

 \begin{proof}
 Note that $\mathcal{B} := \{K_{\rho}\}_{ \rho \in \Delta(1)}$ is a basis of $\SF(\Delta)$ and for each $\rho$, $\{K_{\rho'}\}_{\rho \not=\rho' \in \Delta(1)}$ is a basis of the boundary $\partial H_{\rho}$. With respect to $\mathcal{B}$, the projections in \ref{rem:relconecorr} are given by $pr_{\rho}  = r_{\rho}K^{*}_{\rho} = l_{\rho} \in \dual{\SF(\Delta)}, \rho \in \Delta(1)$ where $r_{\rho} \in \R_{> 0}$ and $K^{*}_{\rho} \in \dual{\SF(\Delta)_{\R}}$ with $K^{*}_{\rho}(K_{\rho}) = 1$. Then the restrictions $pr_{\rho}|_{M} = \im(l_{\rho}) = n_{\rho} \in N$. Let $S$ be a set of relevant monomials such that $\mProj A = \cup_{f \in S} D_{+}(f)$. Then by remark \ref{rem:relconecorr} each $f \in S$ corresponds to the simplicial cone 
 \[\sigma_f = \langle pr_{\rho} : \rho \in \Delta(1) \setminus \supp{f} \rangle = \langle n_{\rho} : \rho \in \Delta(1) \setminus \supp{f} \rangle  
 \] 
 over $\Delta$, and since  $\Delta$ is simplicially complete, we have $\sigma_f= \sigma$ for some $\sigma \in \Delta$. Therefore $f = \chi(h_{\sigma})$ for each $f \in S$ and hence $\tProj A = \cup_{\sigma \in \Delta} D_{+}(\chi(h_{\sigma})) = \mProj A$.\\
 
 Now assume $\mu$ is an isomorphism. Then we have $\tProj A \cong \cup_{\sigma \in \Delta} D_{+}(\chi(h_{\sigma})) \cong \mProj A$. Suppose $\Delta$ is not simplicially complete and $\sigma$ is a simplicial cone over $\Delta$, not contained in $\Delta$. Then $\sigma$ corresponds to the relevant monomial $f = \prod_{\rho \in \Delta(1) \setminus \sigma(1)}\chi(K_{\rho})$. The homogeneous prime ideal $\mathscr{P}= (\chi(K_{\rho}) : \rho \in  \sigma(1))$ is contained in $\Spec A_{(f)}$. For any $\tau \in \Delta$ the corresponding relevant monomial $g = \prod_{\rho \in \Delta(1) \setminus \tau(1)}\chi(K_{\rho})$ takes zero at the point $\mathscr{P}$ and therefore $\mathscr{P} \not \in \Spec A_{(g)}$. Hence we get $\mathscr{P} \in \Spec A_{(f)} \subset \mProj A$ and $\mathscr{P} \not \in \cup_{\sigma \in \Delta} D_{+}(\chi(h_{\sigma})) = \mProj A$, a contradiction. Therefore $\Delta$ is simplicially complete.

 \end{proof}

 The following are some easy implications of the preceding theorem
 \ref{the:isom}.
 
\begin{corollary} \label{cor:surfnonisom}
Let $X$ be a simplicial toric surface associated to $\Delta$ in $N_{\R}$ satisfying \ref{asm:fullfan}. Then the morphism $\mu : X \rightarrow \mProj A$ in \ref{the:embdd} is not an isomorphism if and only if there exists a simplicial cone $\tau$ over $\Delta$, not contained in $\Delta$, with $\sigma \subset \tau$ for some $\sigma \in \Delta$.
\end{corollary}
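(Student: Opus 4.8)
The plan is to read this off directly from Theorem \ref{the:isom}, using the fact that on a surface there is very little room for a simplicial cone. By Theorem \ref{the:isom} the morphism $\mu$ fails to be an isomorphism precisely when $\Delta$ is not simplicially complete, and by Definition \ref{def:simcomfan} this means that $\Delta$ omits some simplicial cone generated by a linearly independent subset of $\Delta(1)$. So the whole statement reduces to understanding what such an omitted cone must look like when $\NReal$ is two-dimensional.

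First I would record the dimension count. Since $X$ is a surface, $\NReal$ has dimension $2$, so any linearly independent subset of $\Delta(1)$ has at most two elements, and hence a simplicial cone over $\Delta$ is either $\{0\}$, a single ray $\rho \in \Delta(1)$, or a two-dimensional cone $\genby{\rho_1,\rho_2}$ spanned by two linearly independent rays. As $\Delta$ is a fan it already contains $\{0\}$ and every ray of $\Delta(1)$; therefore any simplicial cone over $\Delta$ that does not belong to $\Delta$ is forced to be two-dimensional.

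With this in hand both implications are immediate. For the reverse direction, if there is a simplicial cone $\tau$ over $\Delta$ with $\tau \notin \Delta$, then $\Delta$ is not simplicially complete, so $\mu$ is not an isomorphism by Theorem \ref{the:isom}; here the extra hypothesis $\sigma \subset \tau$ plays no role. For the forward direction, if $\mu$ is not an isomorphism then Theorem \ref{the:isom} produces an omitted simplicial cone, which by the dimension count is two-dimensional, say $\tau = \genby{\rho_1,\rho_2}$; taking $\sigma = \rho_1 \in \Delta(1) \subseteq \Delta$ yields a cone of $\Delta$ with $\sigma \subset \tau$, as required.

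The one point I would be careful about, and really the only subtlety, is the intended scope of ``$\sigma \subset \tau$ for some $\sigma \in \Delta$''. One must allow $\sigma$ to be a bounding ray and not insist that it be a maximal (two-dimensional) cone: a non-complete surface fan can fail to be simplicially complete while possessing no two-dimensional cone at all (for instance a fan consisting of the origin together with two rays spanning a non-antipodal pair), so the witnessing face $\sigma$ is in general only a ray. Under that reading the containment condition says exactly that the omitted cone is two-dimensional, and the corollary is a faithful surface-level translation of Theorem \ref{the:isom}.
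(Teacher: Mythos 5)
Your proposal is correct and follows essentially the same route as the paper's own (very brief) proof: both reduce the statement to Theorem \ref{the:isom}, observe that the reverse implication is immediate since the existence of such a $\tau$ already witnesses failure of simplicial completeness, and in the forward direction take the witnessing $\sigma$ to be a one-dimensional cone (a ray of the omitted cone $\tau$). Your added dimension count, showing that any omitted simplicial cone in a surface fan is necessarily two-dimensional, is a correct and slightly more explicit justification of the step the paper leaves implicit.
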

\begin{proof}
$\mu$ not being an isomorphism means $\Delta$ is not simplicially complete. In the forward direction, we can take $\sigma$ to be a one-dimensional cone. The opposite direction is obvious.
\end{proof}

\begin{corollary} \label{cor:torsurfhcl}
Let $X$ be a simplicial toric surface associated to $\Delta$ in $N_{\R}$ satisfying \ref{asm:fullfan}.  If $\text{rank}(\Pic(\Delta)) \geq 3$ then the morphism $\mu: X \rightarrow \mProj A$ in theorem \ref{the:embdd} is not an isomorphism.
\end{corollary}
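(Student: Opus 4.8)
The plan is to reduce the statement to the combinatorial isomorphism criterion of Theorem~\ref{the:isom} and then to count rays. First I would translate the hypothesis $\rank(\Pic(\Delta))\ge 3$ into a bound on $\sizeof{\Delta(1)}$. Since $X$ is a surface, Assumption~\ref{asm:fullfan} gives $\rank M=\rank N=2$. Because $\Delta$ is simplicial, Lemma~\ref{lem:polalg} together with Remark~\ref{rem:algpresn} shows that $\{K_\rho\}_{\rho\in\Delta(1)}$ is a basis of $\SF(\Delta)$, so $\rank\SF(\Delta)=\sizeof{\Delta(1)}$. Taking ranks in the split exact sequence~\eqref{eq:splexseq} then yields
\[
  \rank(\Pic(\Delta))=\rank\SF(\Delta)-\rank M=\sizeof{\Delta(1)}-2 ,
\]
so the hypothesis $\rank(\Pic(\Delta))\ge 3$ is equivalent to $\sizeof{\Delta(1)}\ge 5$. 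By Theorem~\ref{the:isom}, $\mu$ is an isomorphism if and only if $\Delta$ is simplicially complete, hence it suffices to prove that a simplicial surface fan with at least five rays is never simplicially complete.

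The key step is an elementary angular argument. Order the rays by angle, writing their primitive generators as $n_{\rho_1},\dots,n_{\rho_k}$ at angles $0\le\theta_1<\dots<\theta_k<2\pi$, where $k=\sizeof{\Delta(1)}\ge 5$. Since all angles lie in $[0,2\pi)$ we have $(\theta_3-\theta_1)+(\theta_5-\theta_3)=\theta_5-\theta_1<2\pi$, so at least one of the spans $\theta_3-\theta_1$ and $\theta_5-\theta_3$ is strictly less than $\pi$; choose $i\in\{1,3\}$ with $\theta_{i+2}-\theta_i<\pi$. Then $n_{\rho_i}$ and $n_{\rho_{i+2}}$ have angular separation in $(0,\pi)$, so they are linearly independent and $\tau:=\genby{n_{\rho_i},n_{\rho_{i+2}}}$ is a two-dimensional simplicial cone in the sense of Definition~\ref{def:simcomfan}(1). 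Moreover $\theta_{i+1}\in(\theta_i,\theta_{i+2})$ together with $\theta_{i+2}-\theta_i<\pi$ forces $n_{\rho_{i+1}}$ to lie in the relative interior of $\tau$.

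To conclude, I would argue that $\tau\notin\Delta$. Indeed, if $\tau\in\Delta$, then the fan axioms applied to the ray $\rho_{i+1}\in\Delta(1)$ would force $\rho_{i+1}=\tau\cap\rho_{i+1}$ to be a face of $\tau$; but the only one-dimensional faces of $\tau$ are $\rho_i$ and $\rho_{i+2}$, and $\rho_{i+1}$ differs from both, a contradiction. Thus $\tau$ is a simplicial cone in $\Delta$ that is not contained in $\Delta$, so $\Delta$ is not simplicially complete, and by Theorem~\ref{the:isom} the map $\mu$ is not an isomorphism. (Alternatively one may phrase the conclusion through Corollary~\ref{cor:surfnonisom}, taking $\sigma=\rho_{i+1}\subset\tau$.)

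The only point requiring care is that $X$ is \emph{not} assumed complete, so the rays need not fill out the whole circle and the sectors between angularly consecutive rays need not all be covered by cones of $\Delta$; this is the expected main obstacle. I would emphasize that the argument sidesteps it entirely: the inequality $\theta_5-\theta_1<2\pi$ used in the pigeonhole step holds for any five rays purely because their angles lie in $[0,2\pi)$, independently of whether $\Delta$ is complete, and this is all that the counting requires.
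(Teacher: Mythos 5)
Your proof is correct and follows essentially the same route as the paper: translate $\rank(\Pic(\Delta))\ge 3$ into $\sizeof{\Delta(1)}\ge 5$ via the split exact sequence, then observe that among five rays some three lie within an arc of less than $\pi$, so the middle one sits in the interior of the simplicial cone spanned by the outer two, which therefore cannot belong to $\Delta$. The paper's proof states the ``three rays in less than $180^{\circ}$'' step without justification; your pigeonhole computation and the explicit face-of-a-cone contradiction simply supply the details it leaves implicit.
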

\begin{proof}
The hypothesis implies cardinality of $\Delta(1)$ is at least $5$. Note that each cone in $\Delta$ is generated by at most $2$ one-dimensional cones. There are $3$ one-dimensional cones in less than $180^{\circ}$. This says $\Delta$ is not simplicially complete, in other words, $\mu$ is not an isomorphism.
\end{proof}

\begin{remark}
Let $X$ be a simplicial toric surface associated to $\Delta$ in $N_{\R}$ satisfying \ref{asm:fullfan}. One can show that the condition  $\text{rank}(\Pic(\Delta)) \geq 3$ is equivalent to the condition that the fan $\Delta$ has at least $5$ one-dimensional cones.
\end{remark}

 Recall that, all complete nonsingular toric surfaces are gotten by successive blow-ups of  either $\mathbb{P}^2$ or Hirzebruch surfaces $H_r, r \geq 0.$

\begin{corollary}
Let $X$ be a complete nonsingular toric surface corresponding to fan $\Delta$, $A$ be the $\C$-algebra of support functions on $\Delta$ and $\mProj A$ the associated multihomogeneous space.
\begin{enumerate} [label = (\roman*)]
    \item $X = \mathbb{P}^2 :$ Theorem \ref{the:isom} implies $\mu : X \rightarrow \mProj A$ is an isomorphism since the fan is simplicially complete.
    \item $X = H_0 = \mathbb{P}^{1} \times \mathbb{P}^{1} :$ Same argument as in $(i)$.
    \item $X = H_r, r \geq 1 :$  It is obvious to see that the fan of $X$ is not simplicially complete, so by theorem \ref{the:isom} $\mu: X \rightarrow \mProj A$ is not an isomorphism.
    \item  $X$ is a (successive) blowup of $ \mathbb{P}^2 \text{ or } H_{r}, r \geq 0 :$ Same argument as in $(iii)$.
\end{enumerate}
\end{corollary}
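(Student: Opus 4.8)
The plan is to verify, case by case according to the classification of complete nonsingular toric surfaces recalled just above the statement, whether the fan $\Delta$ is simplicially complete, and then to invoke Theorem \ref{the:isom} (together with Corollary \ref{cor:torsurfhcl}) to conclude. The first step I would carry out is to reformulate simplicial completeness for a complete surface fan. Since $X$ is complete and nonsingular, $N \cong \Z^2$ and the rays $\Delta(1) = \{\rho_1, \dotsc, \rho_n\}$ may be listed in cyclic order around the origin, with the maximal cones being exactly the cones $\langle n_{\rho_i}, n_{\rho_{i+1}}\rangle$ spanned by consecutive rays (indices taken mod $n$). Two linearly independent rays generate a $2$-dimensional strongly convex simplicial cone in the sense of Definition \ref{def:simcomfan}, and such a cone lies in $\Delta$ precisely when the two rays are consecutive. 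Hence $\Delta$ is simplicially complete if and only if every pair of linearly independent rays is consecutive, equivalently if and only if every non-consecutive pair of rays is linearly dependent (antipodal).

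For the base cases I would argue as follows. When $X = \mathbb{P}^2$ there are $n = 3$ rays, every pair is consecutive, the fan is simplicially complete, and so $\mu$ is an isomorphism by Theorem \ref{the:isom}; this is (i). When $X = H_0 = \mathbb{P}^1 \times \mathbb{P}^1$ there are $n = 4$ rays $\pm e_1, \pm e_2$, and the only non-consecutive pairs are the antipodal ones $\{e_1, -e_1\}$ and $\{e_2, -e_2\}$, which are linearly dependent; thus $\Delta$ is again simplicially complete and $\mu$ is an isomorphism, giving (ii). For $X = H_r$ with $r \geq 1$ there are again $n = 4$ rays, which one may take to be $(1,0),(0,1),(-1,r),(0,-1)$ in cyclic order; the non-consecutive pair $(1,0),(-1,r)$ has determinant $r \neq 0$, hence is linearly independent yet generates a $2$-cone not in $\Delta$. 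So $\Delta$ is not simplicially complete and $\mu$ is not an isomorphism by Theorem \ref{the:isom}, which is (iii).

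For the remaining surfaces (iv) I would use the ray count. A complete nonsingular toric surface with fan $\Delta$ satisfies $\rank \Pic(\Delta) = |\Delta(1)| - 2$, and a blow-up at a torus-fixed point is a star subdivision inserting one new ray into the interior of a maximal cone, increasing $|\Delta(1)|$ by one. Consequently any surface obtained from $\mathbb{P}^2$ or some $H_r$ by enough blow-ups to leave the cases (i)--(iii) has $|\Delta(1)| \geq 5$, hence $\rank \Pic(\Delta) \geq 3$, and Corollary \ref{cor:torsurfhcl} shows at once that $\mu$ is not an isomorphism. Alternatively, one checks that inserting a ray strictly inside a maximal cone always produces a non-consecutive linearly independent pair, so simplicial completeness fails directly by the reformulation of the first step.

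I anticipate the main obstacle to be the $n = 4$ analysis, namely making precise why $H_0$ is simplicially complete while every $H_r$ with $r \geq 1$ is not: the subtlety is that simplicial completeness constrains only linearly \emph{independent} pairs, so it is exactly the breaking of the antipodal symmetry of the four rays (present for $H_0$, absent for $H_r$, $r \geq 1$) that governs the dichotomy. Once this is isolated, the small cases reduce to routine determinant computations and the large cases reduce immediately to the already-established Corollary \ref{cor:torsurfhcl}.
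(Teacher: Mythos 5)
Your proposal is correct and follows essentially the same route as the paper, which justifies each case inline by checking simplicial completeness of the fan and invoking Theorem \ref{the:isom} (with Corollary \ref{cor:torsurfhcl} available for the blow-up case). Your write-up simply supplies the details the paper treats as obvious --- the cyclic-order reformulation of simplicial completeness for surface fans, the determinant checks for $H_0$ versus $H_r$, and the ray count $\sizeof{\Delta(1)} \geq 5$ for iterated blow-ups --- and these details are all accurate.
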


\bibliographystyle{alpha}
\bibliography{references}
\end{document}